\newcommand{\ZZ}{{\mathbb Z}}
\newcommand{\NN}{{\mathbb N}}
\newcommand{\PP}{{\mathbb P}}
\newcommand{\KK}{{\mathbb K}}
\newcommand{\CC}{{\mathbb C}}
\newcommand{\RR}{{\mathbb R}}
\newcommand{\TT}{{\mathbb T}}
\newcommand{\E}{{\mathcal E}}
\DeclareMathOperator{\Log}{{\textup{Log}}}
\DeclareMathOperator{\Aff}{{\textup{Aff}}}
\newtheorem{thm}{Theorem}[section]
\newtheorem{defi}[thm]{Definition}
\newtheorem{prop}[thm]{Proposition}
\newtheorem{lemma}[thm]{Lemma}
\newtheorem{cor}[thm]{Corollary}
\theoremstyle{definition}
\newtheorem{rem}[thm]{Remark}}
\theoremstyle{definition}
\newtheorem{exa}[thm]{Example}}
\newcommand{\fonction}[5]
	{ \begin{array}[t]{rccc}
 #1~: 	&	#2	&	\rightarrow	&	#3	\\
	&	#4	&	\mapsto		&	#5
	\end{array}}
\begin{document}

\title{On the Total Curvature of Tropical Hypersurfaces}

\date{\today}

\author[Bertrand]{Benoît Bertrand}

\address{Université Paul Sabatier, Institut Mathématiques de Toulouse, 118 route de Narbonne, F-31062 Toulouse Cedex 9, France}

\email{benoit.bertrand@math.univ-toulouse.fr}

\author[López de Medrano]{Lucía López de Medrano}

\address{Unidad Cuernavaca del Instituto de Matemáticas,Universidad Nacional
Autonoma de México. Cuernavaca, México}

\email{lucia@matcuer.unam.mx}

\author[Risler]{Jean-Jacques Risler}

\address{Université Pierre et Marie Curie,  Paris 6, 
4 place Jussieu, 75005 Paris, France}

\email{risler@math.jussieu.fr}

\copyrightinfo{2013}{Benoît BERTRAND, Lucía LÓPEZ DE MEDRANO, Jean-Jacques RISLER}

\subjclass[2010]{Primary 14T05, 14P05; Secondary 52B20}

\keywords{Logarithmic Curvature, Amoebas, Real Tropical Hypersurfaces, Polyhedral Hypersurfaces}

\thanks{The authors want to thank the Laboratorio Internacional Solomon Lefschetz (LAISLA), associated to the CNRS (France) and CONACYT (Mexico). Its support was crucial in the development of this project. Bertrand and Risler were partially supported by the ANR-09-BLAN-0039-02. López de Medrano was partially supported by UNAM-PAPIIT IN-117110 and UNAM-PAPIIT IN-108111. Risler was also partially supported by UNAM-PAPIIT IN-108112.}

\begin{abstract}
This paper studies the curvatures of amoebas and real amoebas (i.e. essentially logarithmic curvatures of the complex and real parts of a real algebraic hypersurface) and of tropical and real tropical hypersurfaces. 
 If $V$ is a tropical hypersurface defined over the field  of real Puiseux series, it has a real part $\RR V$ which is a polyhedral complex. 
We define the total curvature of $V$ (resp. $\RR V$) by using the total curvature of Amoebas and passing to the limit.
 We also define the  ``polyhedral total curvature'' of the real part $\RR V$ of a generic tropical hypersurface. 
 The main results we prove about these notions are the following:
\begin{enumerate}
\item The fact that 
 the total curvature and the polyhedral total curvature coincide for real non-singular tropical hypersurfaces. 
\item A universal inequality between the total curvatures of $V$ and $\RR V$ and another between the logarithmic curvatures of the real and complex parts of a real algebraic hypersurface. 
\item The fact that this inequality is sharp in the non-singular case.

\end{enumerate}
\end{abstract}

\maketitle


\section{Introduction}\label{Sec:Intro}

For an affine real smooth algebraic hypersurface $X \subset \CC^{n+1}$ with real part $\RR X \subset \RR^{n+1}$ there is an universal inequality between the total curvatures of the real part $\RR X$ and the one of $\CC X$ (\cite{Ris03}), similar to the ``Smith's Inequality'' between the sums of mod.2 Betti numbers. In this paper we prove an analogous result in the tropical setting; it turns out that in the non-singular tropical case, this inequality becomes an equality; this fact is true in the algebraic world only up to
any positive $\epsilon$ and for special maximal varieties.
 For plane algebraic curves the only cases when the equality holds are the line, the ellipse and the parabola.\\
Let us describe briefly the main results of the paper.\\
If $V$ is a tropical hypersurface defined by a polynomial with coefficients in the field  of real Puiseux series, it has a real part $\RR V$ (see Subsection~\ref{Subsec:Real-Tropical-Hypersurfaces}). Using the fact that $V$ (resp. $\RR V$) is limit of Amoebas (Resp. Real Amoebas), we define the total curvature of $V$ (resp. $\RR V$) by using the total curvature of Amoebas and passing to the limit.\\
For the real part $\RR V$, which is a polyhedral manifold, we also define its total curvature geometrically (as a polyhedral hypersurface), and call it the ``polyhedral total curvature''.\\
Non-singular tropical hypersurfaces are those whose dual subdivision is primitive; they are the tropical counterpart of primitive T-hypersurfaces of Viro's patchworking (see~\cite{Vir83} and \cite{Vir84}).
The main results we prove about these notions are the following: 
\begin{enumerate}
\item The fact that the notions of total curvature and polyhedral total curvature coincide for real non-singular tropical hypersurfaces (Section~\ref{Sec:Polyhedral-Curvature}). 
\item A universal inequality  between the total curvatures of $V$ and $\RR V$ (Subsection~\ref{Subsec:Real-Tropical-Curvature}) based on a similar inequality between the logarithmic Gaussian curvatures of the complex and real parts of a real algebraic hypersurface (Section~\ref{Sec:Amoebas-Curvature}).
\item In the non-singular case, this inequality turns out to be an equality (Subsection~\ref{Subsec:Real-Tropical-Curvature}).
\item A "Gauss-Bonnet's style" formula for the total curvature of a non-singular (complex) tropical hypersurface (Subsection \ref{Subsec:Gauss-Bonnet}).
\end{enumerate}
We would like to thank the referee for their useful comments.
 
The structure of the paper is the following: 

\smallskip

Section 2 is a preliminary one: it introduces notation and basic properties of tropical and real tropical hypersurfaces.

\smallskip

Section 3 treats the case of Amoebas, using the ``Logarithmic Curvature'' to define their curvature. 

\smallskip

Section 4 describes how we can define the tropical total curvature passing to the limit from the one of Amoebas,  both in the real and complex cases, and contains the main results of the paper.

\smallskip

Section 5 is devoted to defining directly the ``Polyhedral Curvature'' for a 
 real non-singular tropical hypersurface
 and to proving that this technique gives the same notion of total curvature than the previous one in the tropical non-singular case.

\smallskip

Section 6 gives some complements and applications, in particular a Gauss-Bonnet's style formula for non-singular tropical hypersurfaces, comparing the Euler characteristic of a generic complex hypersurface in $(\CC^*)^{n+1}$ and the total curvature of its tropicalisation.


\section{Preliminary}\label{Sec:Prelim}


\subsection{Total curvature }\label{Subsec:Total-Curvature}

In all the paper, $\RR^{n+1}$ is considered with its canonical orientation, 
  $\sigma_n$ will be the volume of the unit sphere $S^n \subset \RR^{n+1}$, and we will set $\displaystyle a_n = \pi \frac{\sigma_{2n}}{\sigma_{2n +1}}$.\\
We have then:
\[
\begin{gathered}
\sigma_{2n} = \frac{2 \times (2 \pi)^n}{1.3\dots (2n-1)}\\
\sigma_{2n+1} = \frac{(2\pi)^{n+1}}{2.4 \dots 2n}\\
a_n = \frac{2.4 \dots 2n}{1.3 \dots (2n-1)}.
\end{gathered}
\]

Let $\RR W \subset \RR^{n+1}$ be a smooth oriented hypersurface, $g: \RR W \rightarrow \RR\PP^n$ the Gauss map $x \mapsto n_x$, where $n_x$
 is a non-zero normal vector to $\RR W$ at $x$.
 We define the curvature function $x \mapsto k(x)$ on $\RR W$ as the jacobian of $g$.
The curvature of a measurable  set $U \subset \RR W$ is the integral  $\int_{U} \vert k(x) \vert dv$
of the curvature function on $U$.
The {\bf total curvature} of $ \RR W$ is then by definition: 

$$ \int_{\RR  W} \vert k(x) \vert dv $$
where $dv$ is the canonical Euclidean volume form on $\RR  W \subset \RR^{n+1}$.
It clearly satisfies the following equality~:
\begin{equation} \label{totcurv}
\int_{\RR  W} \vert k(x) \vert dv =  \int_{\RR\PP^n} \# g^{-1}(\beta) ds
\end{equation}
where $ds$ is the canonical volume form on $\RR\PP^n$ and assuming $g^{-1}(\beta)$ is almost everywhere finite.

One can think of the total curvature as the volume of Im$(g)$ taken ``with multiplicities'', the multiplicity of a point $x$ being the cardinality of the fiber $g^{-1}(x)$.\\

If now $W \subset \CC^{n+1}$ is a smooth analytic complex hypersurface, one may define its curvature  as the ``Lipschitz- Killing'' curvature $K(x) dw$, where $dw$ is the canonical volume form on $W \subset \CC^{n+1}$ and $K : W \rightarrow \RR$ the curvature function.\\

Another way, due to Milnor, to define the function $K$ is the following.
Let $\gamma_{\CC} : W \rightarrow \CC P^n$  be  the ``complex Gauss map'' $x \mapsto [N_x W]$, where $N_x W$ is the complex normal vector to $W$ at $x$. One then has:

\begin{equation} \label{compcurv}
(-1)^n K dw = a_n \gamma_{\CC}^{*} (dp)
\end{equation}
where $dp$ is the volume form on $\CC P^n$ (see \cite{Lan79}); note that $(-1)^n K(x)$ is a positive function on $W$.\\
The following inequality is  proved in \cite{Ris03} in the algebraic case:
 \begin{equation} \label{ineqcurv}
  \frac{\sigma_{2 n}}{\sigma_n} \int_{\RR W} \vert k \vert dv \leq \int_W \vert K \vert dw
 \end{equation}
 This paper is devoted to defining similar notions and proving similar results in the tropical case. 
In particular we prove the same type of inequality for the logarithmic Gaussian curvatures which are the natural curvatures in the tropical setting and study its sharpness.


\subsection{Tropical hypersurfaces }\label{Subsec:Tropical-Hypersurfaces}

In order to fix notations and definitions we use in this text, we recall briefly basic notions in tropical geometry. We use the following notation:
the scalar product is written $z\cdot v$; for $X=(X_1,\dots,X_{n+1})$
 and $\alpha=(\alpha_1,\dotsc,\alpha_{n+1})$ we write
 $X^\alpha:=\prod X_i^{\alpha_i}$; the set of vertices of a polytope $\triangle$ is denoted $Vert(\triangle)$. We tend to identify a point and its coordinate vector when it makes the notation less cumbersome. 

We consider the tropical semi-field $\TT=(\RR\cup\{-\infty\},"+","\cdot")$, where the tropical operations are defined by $u"+"v=max\{u,v\}$ and $u"\cdot"v=u+v$. A tropical polynomial $f\in\TT[X_1,\dots,X_{n+1}]$ is of the form 
$$f(X)="\sum_{\alpha\in\mathcal{E}(f)}u_\alpha X^\alpha"=max_{\alpha\in\mathcal{E}(f)}\{ u_\alpha+X\cdot \alpha\}$$
 where $\mathcal{E}(f)$ denotes the set of exponents of $f$. The Newton polytope of $f$ will be denoted by $\triangle_f$.

Given a tropical polynomial $f$ and a point $\omega\in\RR^{n+1}$, the $\omega$-initial part of $f$ is $$In_\omega f(X):="\sum_{\alpha\in\mathcal{E}(f)\mid f(\omega)="u_{\alpha}\omega^\alpha"}u_{\alpha}X^\alpha".$$

A tropical polynomial $f$ determines naturally a polyhedral subdivision of $\RR^{n+1}$ whose cells are formed of the sets of points defining the same initial part of $f$.

Dually, the set of cells
$$\Gamma_f:=(\triangle_{In_\omega f})_{\omega\in\RR^{n+1}}$$ 
 realises a regular subdivision of $\triangle_f$. It is dual to the subdivision of $\RR^{n+1}$ determined by $f$ in the following usual sense. If $c$ is a cell of the subdivision of $\RR^{n+1}$ determined by $f$ there exists a unique dual cell $\check{c}$ of $\Gamma_f$. It satisfies $dim(c)+dim(\check{c})=n+1$ and $c$ and $\check{c}$ generate orthogonal affine subspaces.

We say that a tropical polynomial $f$ is \textbf{generic} if $\Gamma_f$ is simplicial, \textbf{non-singular} if all the maximal dimensional cells of $\Gamma_f$ are primitives simplices, and \textbf{primitive} if $\triangle_f$ is a primitive simplex of dimension $n+1$.

The \textbf{corner locus} of a tropical polynomial $f\in\TT[X]$ is the set of points $\omega\in\RR^{n+1}$ where the value of $f$ is attained by at least two of its monomials. Or equivalently, the set of points contained in cells of dimensions at most $n$. 
 In this text, such a set will be called a \textbf{tropical hypersurface}.

\begin{defi}\label{Def:Tropical-Hypersurface}Let $f\in\TT[X]$. The set 
 $$V(f):=\{\omega\in\RR^{n+1}\mid In_\omega f\text{ is not a monomial}\}$$
is by definition the tropical hypersurface defined by $f$.
\end{defi}

We denote by $Vert(V(f))$ the $0$-dimensional cells of $V(f)$ and more generally $Vert(Z)$ will always denote the $0$-dimensional cells of the natural subdivision of a piecewise linear variety $Z$.

\subsection{Real convergent Puiseux series and tropicalisation }\label{Subsec:Tropicalisation}

A formal series 
$$\xi(t)=\sum_{r\in R}\beta_rt^r$$
 is a \textbf{locally convergent generalised Puiseux series} if $R\subset\RR$ is a well-ordered set, $\beta_r\in\CC$, and the series is convergent for $t>0$ small enough. 
 Denote by $\KK$ the set of all locally convergent generalised Puiseux series. It is an algebraically closed field of characteristic $0$. A series $\xi\in\KK$ is said to be \textbf{real} if all its coefficients are real numbers. We denote by $\KK_{\RR}$ the subfield of $\KK$ composed by the real series.

Since the coefficients of a polynomial $F\in\KK[x_1,\dots,x_{n+1}]$ are locally convergent near $0$, any polynomial over $\KK$ (resp. $\KK_\RR$) can be thought as a one parametric family of complex (resp. real) polynomials. For any $t, \; 0<t \ll 1$, the complex (resp. real) polynomial $F_t$ is the 
 polynomial resulting of the evaluation of the coefficients at $t$.

By hypothesis, the set of exponents of an element of $\KK^*$ has a first element. The map that sends 
 each element of $\KK^*$ to the first element of its set of exponents and $0$ to $\infty$ is a non-archimedean valuation.
 We denote by $val$ the opposite of such a map. In other words, $$val: \KK\rightarrow \RR\cup\{-\infty\}$$ maps $\sum_{r\in\RR}\beta_r t^r\neq 0$ 
 to $-min\{r\mid \beta_r\neq 0\}$ and $0$ to $-\infty$. The map $val$ extends naturally to the map $Val:\KK^{n+1}\rightarrow (\RR\cup\{-\infty\})^{n+1}$ by 
 applying $val$ coordinate-wise. The image of a variety $V\subset \KK^{n+1}$ under the map $Val$ is called the \textbf{non-archimedean amoeba} of $V$. 

Given a polynomial $F\in\KK[x_1,\dots,x_{n+1}]$ one can associate a tropical polynomial. This map is called tropicalisation and acts as follows: If $F(x)=\sum_{\alpha\in\mathcal{E}(F)}c_\alpha x^\alpha$, the \textbf{tropicalisation} of $F$ is the tropical polynomial 
$$Trop(F)(X):="\sum_{\alpha\in\mathcal{E}(F)}val(c_\alpha)X^\alpha".$$
 Kapranov's theorem establishes that the non-archimedean amoeba of $F$ and the tropical hypersurface $V(Trop(F))$ 
 coincide.

Given a tropical hypersurface $V \subset \RR^{n+1}$ and a polynomial $F\in\KK[x_1,\dots,x_{n+1}]$, we will say that $F$ {\it realises} $V$ if $V(Trop(F))=V$.


\subsection{Real tropical hypersurfaces }\label{Subsec:Real-Tropical-Hypersurfaces}

Real tropical hypersurfaces are very closely related to Viro Patchworking (See~\cite{Vir83} and \cite{Vir84}). A description can be found in \cite{Ber2} and one can look at \cite{Mikh04a} pp.~25 and 37, \cite{Vir01}, and
the appendix of \cite{Mikh00} in the case of amoebas for further details.

We recall some definitions here for the convenience of the reader.

Let $F \in \KK_\RR[x_1, \dotsc, x_{n+1}]$ be a {\bf real} polynomial defined over the field of real Puiseux series. 

Let $wal: (\KK^*) \to \RR\times S^1$ be the map sending a real Puiseux series  $\xi(t)=\sum_{r\in R}\beta_rt^r$ to $(val(\xi(t)),arg(\beta_{-val(\xi(t))}))$ and $Wal: (\KK^*)^{n+1} \to \RR^{n+1}\times{\left(S^1\right)}^{n+1}$ be the map defined by $wal$ coordinate-wise.
 The map $wal$ restricts to $wal_\RR: (\KK_\RR^*) \to \RR\times\ZZ_2$ which sends  $\xi(t)=\sum_{r\in R}\beta_rt^r$ to $(val(\xi(t)),sign(\beta_{-val(\xi(t))}))$ and we denote $Wal_\RR: (\KK_\RR^*)^{n+1} \to \RR^{n+1}\times\ZZ_2^{n+1}$ the corresponding restriction of $Wal$.
For any $z\in\ZZ_2^{n+1}$ we will call {\bf orthant} of the torus over the field of Puiseux series and denote by $Q_z^{\KK_\RR}$ the preimage of $\RR^{n+1} \times \{z\}$ under $Wal_\RR$. As in the case of $(\RR^*)^{n+1}$ an orthant is thus a choice of  sign for each coordinate.
The map $Wal$ allows to consider the collection of images under $Val_\RR$ of each orthant  $Q_z^{\KK_\RR}$.

\begin{defi}\label{Def:Real-Tropical-Hypersurface-1}
A {\bf real tropical hypersurface} $V^\RR(Trop(F))$ is the data of $Wal(V(F))$ for a polynomial $F \in \KK_\RR[x_1, \dotsc, x_{n+1}]$.
The {\bf real part} of the real tropical hypersurface is $Wal_\RR(V(F)\cap (\KK_\RR^*)^{n+1})$.
\end{defi}

Let us now compare this definition to the following patchworking procedure.

Let $f$ be a generic tropical polynomial, and  $\vartheta:\mathcal{E}(f)\rightarrow \{1,-1\}$ be a distribution of signs. Let $(e_i)_{i=1..n+1}$ be the canonical basis of $\ZZ^{n+1}\subset\RR^{n+1}$.
For $z=\sum_{i=1}^{n+1}z_i e_i\in\ZZ^{n+1}$,
 let $s_z:\RR^{n+1}\to\RR^{n+1}$ be the symmetry mapping 
 $x=\sum_{i=1}^{n+1}x_i e_i$ to 
 $s_z(x)=\sum_{i=1}^{n+1}(-1)^{z_i}x_i e_i$. 
The map $s_z$ only depends on the reduction modulo $2$ of the coordinates of $z$, so we will indifferently use the notation $s_z$ for $z \in \ZZ^{n+1}$ or $z \in \ZZ_2^{n+1}$.
 
Define the symmetrised distribution of sign $S_z(\vartheta):\mathcal{E}(f)\rightarrow \{1,-1\}$ by $S_z(\vartheta)(v) = (-1)^{z\cdot v}\vartheta(v)$. The maps $S_z$ are involutions on the set $\{\vartheta\}$ of sign distributions on $\mathcal{E}(f)$. They define an action of $(\ZZ_2)^{n+1}$ on sign distributions. We will consider this action via the maps $S_z$ below in Subsection~\ref{Sec:Polyhedral-Curvature} (for example in Proposition~\ref{Prop:Transitivity}).

\begin{defi}\label{Def:Real-Tropical-Hypersurface} Let $f$ be a generic tropical polynomial, and  $\vartheta:\mathcal{E}(f)\rightarrow \{1,-1\}$ a distribution of signs.

The {\bf patchworked real tropical hypersurface} $V^\RR_\vartheta(f)$ is 
the  data of $V(f)$ and $\vartheta$. The {\bf real part } $\RR V^\RR_\vartheta(f)$ of $V^\RR_\vartheta(f)$ is a subset of $\RR^{n+1}\times \ZZ_2^{n+1}$ consisting of relevant symmetric copies of cells of $V(f)$.
Namely for each given $z \in \ZZ_2^{n+1}$ and $c$ a cell of $V(f)$, $s_z(c) \subset \RR V^\RR_\vartheta(f)$ if and only if 
 $$S_z(\vartheta)(Vert(\check{c})) = \ZZ_2.$$
\end{defi}

Here and below we identify the set of elements of $\ZZ_2$ with $\{1,-1\}$ or $\{+,-\}$ depending on which is more convenient. 
With this identification, the above equality   $S_z(\vartheta)(Vert(\check{c})) = \ZZ_2$ just means that not all vertices of $\check{c}$ carry the same sign.

\begin{rem}
The set $\RR V^\RR_\vartheta(f)\cap \left(\RR^{n+1}\times \{z\}\right)$ is either empty or a polyhedral hypersurface.
\end{rem}

\begin{rem}
One can identify the real tropical hypersurfaces $V^\RR_\vartheta(f)$ and $V^\RR_{-\vartheta}(f)$ 
 one being obtained from the other by simultaneously reversing all signs ({\it i.e.}, multiplying $\vartheta(v)$ by $-1$ for all $v$).
The real parts $\RR V^\RR_\vartheta(f)$  and $\RR V^\RR_{-\vartheta}(f)$ are the same.
\end{rem}

If $F \in \KK_\RR[x_1,\dots,x_{n+1}]$ is a polynomial with real series coefficients, we associate to each monomial the sign of the first term of its coefficient. In particular this defines a natural sign distribution $\vartheta_F:\mathcal{E}(Trop(F))\rightarrow \{1,-1\}$ at the vertices of the  subdivision $\Gamma_{Trop(F)}$. The proposition below is a direct consequence of Viro's Patchworking.

\begin{prop}\label{Prop:Patchworking}
Let $F\in \KK_\RR[x_1,\dots,x_{n+1}]$ be a polynomial
 such that  $Trop(F)(X)="\sum_{\alpha\in\mathcal{E}}u_\alpha X^\alpha"$ is generic.
 For $\omega \in \RR^{n+1}$ let $\alpha$ be in $\Gamma_{In_\omega Trop(F)}$.
 If for every such pair $(\omega,\alpha)$ the identity $Trop(F)(\omega)="u_{\alpha}\omega^\alpha"$ holds only if $\alpha$ is a vertex of $\Gamma_{Trop(F)}$ then, $Wal_\RR(V(F)\cap (\KK_\RR^*)^{n+1})=\RR V^\RR_{\vartheta_F}(Trop(F))$ \mbox{i.e.}, the real part of the real tropical hypersurface coincides with the real part of the patchworked tropical hypersurface.
\end{prop}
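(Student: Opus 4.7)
The plan is to reduce the statement to a pointwise comparison: for each $\omega \in \RR^{n+1}$ and each sign vector $z \in \ZZ_2^{n+1}$, decide whether $(\omega, z)$ belongs to each of the two sets, and show the two criteria agree. By Kapranov's theorem, $\Val(V(F)\cap(\KK^*)^{n+1})=V(Trop(F))$, so both sides already project onto $V(Trop(F))$ under the forgetful map $\RR^{n+1}\times\ZZ_2^{n+1}\to\RR^{n+1}$; only the fibres need to be matched.

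Fix $\omega \in V(Trop(F))$, let $c$ be the cell of the subdivision of $\RR^{n+1}$ determined by $Trop(F)$ containing $\omega$ in its relative interior, and let $\check c$ be its dual in $\Gamma_{Trop(F)}$. By duality and by the genericity hypothesis, the set of $\alpha \in \mathcal E(Trop(F))$ realising the maximum $Trop(F)(\omega)=u_\alpha+\omega\cdot\alpha$ coincides with $Vert(\check c)$. For a prospective real Puiseux solution $x=(\xi_1,\dots,\xi_{n+1})$ with $Wal_\RR(x)=(\omega,z)$, I would expand each monomial $c_\alpha x^\alpha$: its leading coefficient is $\vartheta_F(\alpha)\,z^\alpha$ times a positive scalar, carried by the same power of $t$ precisely when $\alpha \in Vert(\check c)$. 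Cancellation of the leading part of $F(x(t))$ thus forces
\[
\sum_{\alpha\in Vert(\check c)} \vartheta_F(\alpha)\, z^\alpha \;=\; 0,
\]
which under the identification $z^\alpha=(-1)^{z\cdot\alpha}$ is exactly the condition $S_z(\vartheta_F)(Vert(\check c))=\ZZ_2$ of Definition~\ref{Def:Real-Tropical-Hypersurface}. This establishes the easy inclusion $Wal_\RR(V(F)\cap(\KK_\RR^*)^{n+1})\subseteq \RR V^\RR_{\vartheta_F}(Trop(F))$ and pins down the combinatorial criterion as a \emph{necessary} condition.

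The harder direction is to show that whenever the combinatorial condition holds, there is an actual real Puiseux solution $x\in (\KK_\RR^*)^{n+1}$ with the prescribed pair $(\omega,z)$. This is precisely where Viro's patchworking theorem is invoked: applying it to the one-parameter family $F_t$ obtained by evaluating the Puiseux coefficients at small $t>0$, one obtains, for $t$ sufficiently small, real points of $V(F_t)\cap(\RR^*)^{n+1}$ whose $\log_t$-rescaled logarithms and coordinate signs realise every pair $(\omega,z)$ satisfying the cancellation condition on each dual cell $\check c$. Lifting these back to real Puiseux series (via the standard implicit-function/Hensel argument that makes patchworking work order by order in $t$) then produces the required points in $V(F)\cap(\KK_\RR^*)^{n+1}$, and the equivariance of the whole picture under the $s_z$ action accounts for all symmetric copies of each cell. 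The main obstacle is this lifting step: without the genericity hypothesis, initial parts could involve non-vertex lattice points of $\Gamma_{Trop(F)}$, and the simple sign-sum cancellation would no longer capture the existence of real solutions; the hypothesis of the proposition is exactly what is needed to keep us inside the standard Viro setup.
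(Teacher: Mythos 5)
The paper itself offers no argument here: it states only that the proposition is ``a direct consequence of Viro's Patchworking.'' Your proposal therefore fills in a proof rather than reproducing one, and the overall architecture is the right one --- forward inclusion by leading-term analysis, reverse inclusion by Viro's patchworking together with a lifting argument, with the genericity hypothesis used exactly to guarantee that the monomials realising $Trop(F)(\omega)$ are the vertices of $\check c$.

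There is, however, one incorrectly stated step. Cancellation of the leading part of $F(x(t))$ does \emph{not} force the unweighted sum $\sum_{\alpha\in Vert(\check c)} \vartheta_F(\alpha)\, z^\alpha$ to vanish. Writing $c_\alpha = c_\alpha^{(0)}t^{-u_\alpha}+\cdots$ and $\xi_i = b_i t^{-\omega_i}+\cdots$ with $b_i\in\RR^*$, cancellation of the leading order forces
\[
\sum_{\alpha\in Vert(\check c)} c_\alpha^{(0)}\, b^\alpha \;=\;0,
\]
a sum of nonzero reals whose $\alpha$-th summand has sign $\vartheta_F(\alpha)\,z^\alpha$ but whose magnitudes $\lvert c_\alpha^{(0)}\rvert\,\lvert b\rvert^\alpha$ are in general all different. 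What one may deduce is that these signs cannot all coincide, which is precisely $S_z(\vartheta_F)(Vert(\check c))=\ZZ_2$; the clean identity you wrote, $\sum_\alpha \vartheta_F(\alpha)\,z^\alpha = 0$, is a strictly stronger (and generically false) assertion and should be replaced by the sign observation. The same positive weights also matter in the converse direction: patchworking supplies, for any sign distribution that is not constant on $Vert(\check c)$, positive magnitudes $p_\alpha$ making the weighted sum vanish and hence a real solution, so the criterion really is ``signs not all equal'' and not a numerical equation. With that line corrected, the rest of your sketch (Hensel-type lifting of the real solutions of $F_t$ back to $\KK_\RR$, and the $s_z$-equivariance accounting for all orthants) is consistent with the paper's appeal to Viro's theorem.
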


\begin{rem}
In order to recover a hypersurface in $\left(\RR^*\right)^{n+1}$ 
 from $\RR V^\RR_\vartheta(f)$
 one  uses the map  $\mathfrak{exp}:\RR^{n+1}\times\ZZ_2^{n+1}\to\left(\RR^*\right)^{n+1}$
  defined by $\mathfrak{exp}((x,z)):=s_z(exp (x))$ where the exponential is applied component-wise.
\end{rem}

\begin{exa}\label{Exa:Real-Conic}
Let $f_c$ be a second degree tropical polynomial $"a_0 + a_1 X +a_2 Y + a_3 X^2  + a_4  XY + a_5 Y^2"$ such that $V(f_c)$ is the tropical conic represented on Figure~\ref{Fig:Newton}~a) and $\vartheta$ be the signed distribution shown on Figure~\ref{Fig:Newton}~b). On Figure~\ref{Fig:Real-Conic}, we have drawn the real part $\RR V^\RR_\vartheta(f_c)$ of the real tropical conic $V^\RR_\vartheta(f_c)$ in each of the four quadrants $\RR^2\times\{z\}$ corresponding to the four elements of $\ZZ_2^2$. The axes are the dashed blue lines. The dotted-dashed black segments are the parts of the symmetric copies $s_z(V(f_c))$ of $V(f_c)$ which do not belong to $\RR V^\RR_\vartheta(f_c)$. The real part $\RR V^\RR_\vartheta(f_c)$ is depicted in plain thick red. In each region of $\RR^2 \setminus s_z(V(f_c))$ we indicated the sign of each vertex in the sign distribution $S_z(\vartheta)$.
On Figure~\ref{Fig:Exp} we represented the image of $\RR V^\RR_\vartheta(f_c)$ under the map $\mathfrak{exp}$ (still in plain thick red).
\end{exa}

\begin{figure}
\begin{tabular}{cc}
\includegraphics[width=0.3\textwidth] {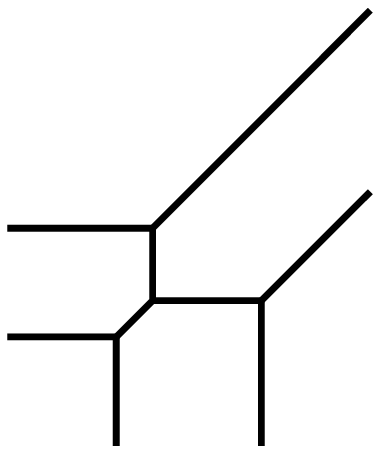}&
\includegraphics[width=0.4\textwidth]{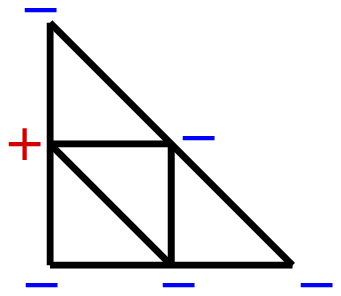}\\
a) A tropical conic & b) Its Newton polygon with a distribution of signs
\end{tabular}
\caption{A tropical conic $V(f_c)$ and the corresponding triangulation of its Newton polygon  with a sign distribution $\vartheta$ at its vertices.}
\label{Fig:Newton}
\end{figure}

\begin{figure}
\begin{tabular}{cc}
\includegraphics[width=0.4\textwidth]{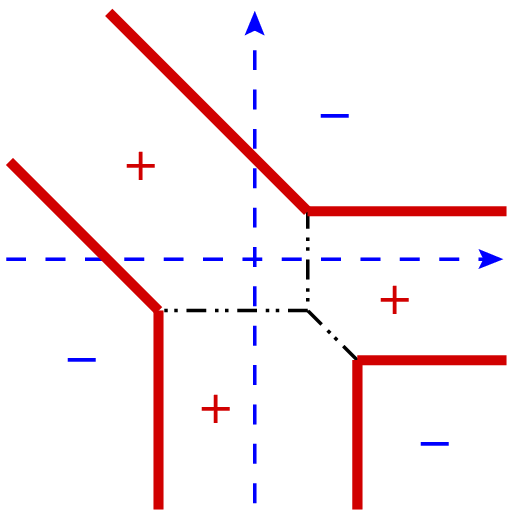}&
\includegraphics[width=0.4\textwidth]{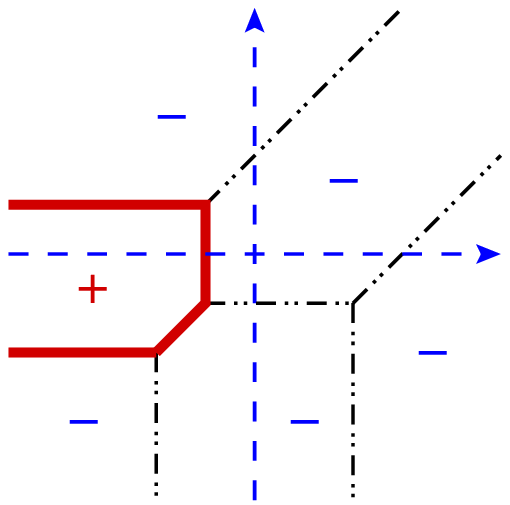}\\
a) In  $\RR^2\times\{(1,0)\}$ & b) In the first quadrant $\RR^2\times\{(0,0)\}$\\ 
\includegraphics[width=0.4\textwidth]{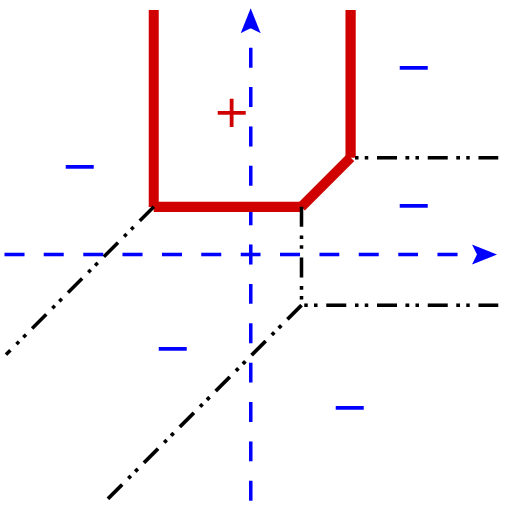}&
\includegraphics[width=0.4\textwidth]{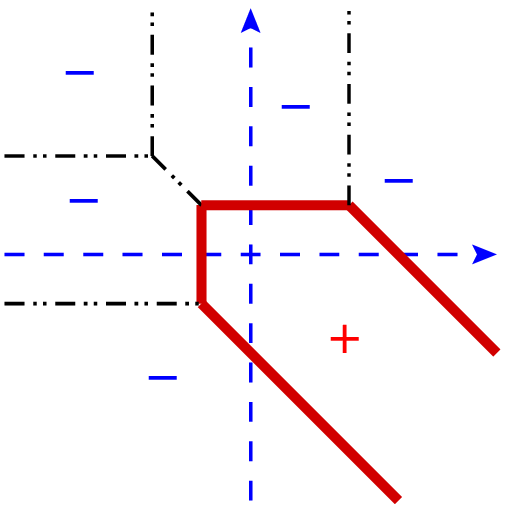}\\
c) In  $\RR^2\times\{(1,1)\}$ & d) In $\RR^2\times\{(0,1)\}$ 
\end{tabular}
\caption{The real part
 $\RR V^\RR_\vartheta(f_c)$ of the real tropical conic $V^\RR_\vartheta(f_c)$ in the four quadrants (after the relevant reflections). The signs of the corresponding vertices of the dual triangulation are indicated in each region of the plane.}\label{Fig:Real-Conic}
\end{figure}

\begin{figure}
\includegraphics[width=\textwidth] {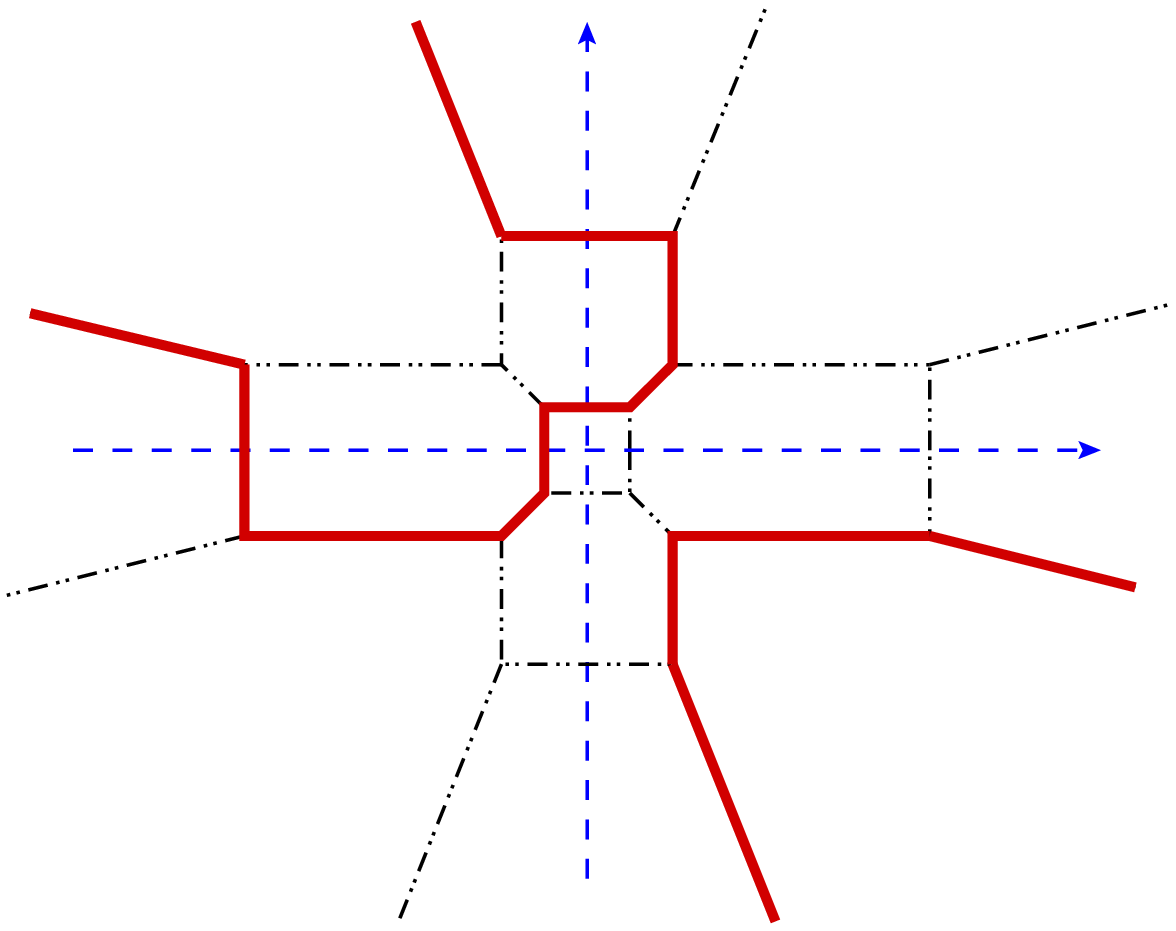}
\caption{The real tropical conic $V^\RR_\vartheta(f_c)$ after applying the map $\mathfrak{exp}$. }\label{Fig:Exp}
\end{figure}

When no confusion is possible we will abuse notation and terminology and write $V^\RR_\vartheta(f)$ instead of $\RR V^\RR_\vartheta(f)$ and "real tropical hypersurface" instead of "real part of the real tropical hypersurface". For a vertex $\textbf{v}$ of $V^\RR_\vartheta(f)$,  when $(\textbf{v},z)$ is a vertex  of $\RR V^\RR_\vartheta(f)$, it will be implicitly denoted by $s_z(\textbf{v})$. Notice that if all vertices of $\check{\textbf{v}}$ have the same sign, $(\textbf{v},z)$ is not in  $\RR V^\RR_\vartheta(f)$.



\section{Total curvature of real and complex amoebas}\label{Sec:Amoebas-Curvature}

Let  $F\in \KK_{\RR}[x_1,\dots,x_{n+1}]$ be a polynomial defining a non-singular hypersurface in ${(\KK^*)}^{n+1}$. For $t\in\RR^*$, $\vert t \vert \ll 1$, $F_t\in\RR[x_1,\dots,x_{n+1}]$ defines a non-singular hypersurface  $V(F_t)\subset(\CC^*)^{n+1}$. We call it a real algebraic variety because $F_t$ is defined over $\RR$ and denote by $\RR V(F_t)\subset (\RR^*)^{n+1}$ the real part of $V(F_t)$.\\

We will set $\triangle_F$ for the Newton polygon of $F$ (or $F_t$ for $0 < t \ll  1$). 
 We denote by $\Log_t$ the map from $(\CC^*)^{n+1}$ to $\RR^{n+1}$ that sends $(x_1,\dotsc, x_{n+1})$ to $(\log_t(|x_1|),\dotsc, \log_t(|x_{n+1}|))$ where $\log_t$ is the base $t$ logarithm.

\begin{defi}\label{Def:Amoebas}
Set $x = (x_1, \dots, x_{n+1})$.
\par 1) For $F \in \KK[x], 0<t \ll  1$, $F_t \in \CC[x]$, the amoeba of $V(F_t)$ is 
\[ \mathcal{A}(F_t) = \Log_t(V(F_t)) \subset {\RR}^{n+1}. \]
\par 2) For $F \in{\KK}_{\RR}[x],  0<t \ll  1$, $F_t \in \RR[x]$, The real amoeba of $\RR V(F_t)$ is
\[  \mathcal{A}^\RR(F_t) = \Log_t(\RR V(F_t)). \]

\end{defi}

\begin{rem} In each orthant $Q \subset (\RR^*)^{n+1}$, the map $\Log_t \vert_{Q}$ is a diffeomorphism onto $\RR^{n+1}$; then we may define the `` Gauss map `` $g_t : \mathcal{A}^\RR(F_t) \rightarrow \RR \PP^n$'' by taking  the Gauss map for the image of each orthant (then for some points of the Amoeba, the ``map'' $g_t$  may be multivalued).
\end{rem}

We then have the following diagram:
\begin{eqnarray}
\label{Diag:Log-Gauss}
\xymatrix{ \RR V(F_t) \ar[rd]^{\gamma_t^\RR} \ar[d]^{\Log_t} \\
\mathcal{A}^\RR(F_t) \ar[r] _{g_t} & \RR \PP^n}
\end{eqnarray}

where $g_t$ is the Gauss map and $\gamma_t^\RR=g_t\circ \Log_t$ the logarithmic Gauss map, defined as:
$$\gamma_t^\RR (x)=[x_i\frac{\partial F_t}{\partial x_i}].$$

\begin{rem}\label{Def:Amoebas-Real-Curvature} According to diagram (\ref{Diag:Log-Gauss}), the  total curvature of the amoeba $\mathcal{A}^\RR(F_t)$ is 
$$\int_{\mathcal{A}^\RR(F_t)} \vert k \vert dv = \int_{\RR\PP^n} \# {\left(\gamma_t^\RR\right)^{-1}}(\beta) ds $$ 
\end{rem}

 In the complex case, contrary to the real case, the Amoeba $\mathcal{A}(F_t)$ is not in general an immersed manifold, therefore there is no natural definition of a Gauss map $\mathcal{A}(F_t) \rightarrow \CC \PP^n$.
 However, the Logarithmic Gauss map $\gamma_t$: $ V(F_t) \rightarrow  \CC \PP^n$ is meaningfull, and we have the following diagram:

\[
\xymatrix{ V(F_t) \ar[rd]^{\gamma_t} \ar[d]^{\Log_t} \\
\mathcal{A}(F_t)  & \CC \PP^n}
\]
It is then natural to give the following definition (see (\ref{compcurv})):

\begin{defi}\label{Def:Amoebas-Complex-Curvature}The total curvature of the amoeba $\mathcal{A}(F_t)$ is defined by 
$$\int_{\mathcal{A}(F_t)}  K  : = (-1)^n a_n \int_{\CC \PP^n} \#  \gamma_t^{-1}(\beta) dp.$$ 
\end{defi}

\begin{rem}\label{Rem:Multiplicative-Translation-Invariance}
We consider subvarieties of the torus $(\CC^*)^n$ therefore for any $\omega\in\ZZ^{n+1}$, $F_t$ and $x^\omega F_t$ define the same variety.
Moreover, the logarithmic Gauss map 
$$\fonction{\gamma}{V(F_t)}{\CC\PP^n}{x}{[x_i\frac{\partial F_t}{\partial x_i}]}$$
is also invariant when multiplying $F_t$ by $x^\omega$ (since the tangent space to $V(F_t)$ at a given point is unaffected by this action).
\end{rem}

These definitions lead us to consider the systems 
\[ (G'_\beta)  \begin{cases}  F_t(x) = 0 \\ \displaystyle [x_1\frac{\partial F_t}{\partial x_1}: \dotso : x_{n+1}\frac{\partial F_t}{\partial x_{n+1}}] = [\beta_1:\dotso :\beta_{n+1}] \end{cases} \]
where  $[\beta]=[\beta_1:\beta_2:\dotso :\beta_{n+1}]$ is an element
 of $\CC \PP^{n+1}$.
Since we only consider solutions in the torus, the system $(G'_\beta)$
is equivalent to 

\[ (G_\beta)  \begin{cases}  F_t= 0 \\ \displaystyle x_1\frac{\partial F_t}{\partial x_1}=\beta_1y 
\\ \displaystyle \vdots \\ \displaystyle x_{n+1}\frac{\partial F_t}{\partial x_{n+1}}=\beta_{n+1}y \end{cases} \]
where we introduce a new variable $y\in\CC^*$. 

\begin{defi}
We say that a polynomial $F_t$ is B-generic if, for a generic $\beta$, the system $(G_{\beta})$ satisfies the genericity conditions of \cite{Bern75}
{\it i.e.}, the restriction of the system to any proper face of the convex hull of its set of exponent has no solution in the torus. 
\end{defi}

\begin{rem}\label{Rem:Multiplicative-Translation-Invariance-2}
Let $(G_{\beta}'')$ be the system
\[ \begin{cases} x^\omega F_t = 0 \\  \displaystyle [x_1\frac{\partial x^\omega F_t}{\partial x_1}: x_2\frac{\partial x^\omega F_t}{\partial x_2}: \dotso : x_{n+1}\frac{\partial x^\omega F_t}{\partial x_{n+1}}] = [\beta_1:\beta_2:\dotso :\beta_{n+1}] \end{cases}.\]

If the Newton polytopes of $F_t$ and $x^\omega F_t$ are included in $(\RR_+^*)^{n+1}$ we find algebraically the statement of Remark~\ref{Rem:Multiplicative-Translation-Invariance}. Indeed, $(G_{\beta}'')$ is then clearly equivalent to:
\[ \begin{cases} F_t = 0 \\  \displaystyle [x_1\frac{\partial F_t}{\partial x_1}: x_2\frac{\partial F_t}{\partial x_2}: \dotso : x_{n+1}\frac{\partial  F_t}{\partial x_{n+1}}] = [\frac{\beta_1}{x^\omega}:\frac{\beta_2}{x^\omega}:\dotso :\frac{\beta_{n+1}}{x^\omega}] \end{cases} \]
since we consider solutions in $(\CC^*)^{n+1}$.
Thus $(G_{\beta}'')$ is equivalent to $(G_{\beta}')$ and for every $[\beta]\in\CC\PP^{n+1}$, both corresponding logarithmic Gauss maps have the same fiber.
\end{rem}

Let $F_t^\delta$ denote the truncation of $F_t$ to a face $\delta$ of the Newton polytope $\triangle_{F_t}$ of $F_t$.

\begin{defi}[Viro]
Let $F\in\CC[x_1,\dotsc,x_{n+1}]$ be a polynomial, $\triangle_{F}$ its Newton polytope and $X_{\triangle_{F}}$ the toric variety associated to
$\triangle_{F}$. We say that $F$ is  {\bf completely non-degenerate} if for each (not necessarily proper) face $\delta$ of $\triangle_{F}$ the restriction $F^\delta$ defines a non-singular variety in $(\CC^*)^{n+1}$.
\end{defi}

\begin{rem}\label{Rem:Tprimitive-nondegenerate}
Let $F\in\KK[x_1,\dotsc,x_{n+1}]$ be such that $Trop F$ is non-singular. Then $F_t$ is completely non-degenerate for $0 < t \ll 1$.
\end{rem}

\begin{prop}\label{Prop:Nondegenerate}
A polynomial $F_t\in\CC[x_1,\dotsc,x_{n+1}]$ is B-generic if and only if it is completely non-degenerate.
\end{prop}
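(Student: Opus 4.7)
The proof unpacks Bernstein's criterion for $(G_\beta)$ direction by direction. Writing $v = (v_x, t) \in \RR^{n+1} \times \RR$ non-zero, let $\delta = \delta(v_x)$ be the face of $\triangle_F$ on which $v_x$ is maximized and set $m_i(v_x) = \max\{v_x \cdot \alpha : c_\alpha \alpha_i \neq 0\}$, $m(v_x) = \max\{v_x \cdot \alpha : c_\alpha \neq 0\}$. The $v$-truncation of $F_t$ is $F_t^\delta$; the $v$-truncation of $x_i \partial F_t / \partial x_i - \beta_i y$ equals $-\beta_i y$ when $t > m_i$, equals $x_i \partial F_t^{\delta_i}/\partial x_i$ (where $\delta_i$ is the $v_x$-face of the support of $F_t$ restricted to $\{\alpha_i > 0\}$) when $t < m_i$, and is their sum when $t = m_i$. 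The key algebraic ingredient is the Euler-type identity $\sum_i u_i\, x_i \partial F_t^\delta / \partial x_i = (u \cdot \alpha_0)\, F_t^\delta$, valid for every $u$ normal to the affine span of $\delta$ and every $\alpha_0 \in \delta$.

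For ($\Leftarrow$), suppose $F_t$ is completely non-degenerate and that $(x^*, y^*) \in (\CC^*)^{n+2}$ is a common zero of the $v$-truncated system for some $v \neq 0$ and generic $\beta$. If $t > m_i$ for some $i$, then $-\beta_i y^* = 0$ with $\beta_i \neq 0$ contradicts $y^* \in \CC^*$. If $t = m(v_x)$, then $\delta$ must meet every $\{\alpha_i > 0\}$ (otherwise $m_j < m = t$ for some $j$, reducing to the previous case); the Euler identity applied to $F_t^\delta(x^*) = 0$ then gives $(u \cdot \beta)\, y^* = 0$ for every $u$ normal to the affine span of the proper face $\delta$, and choosing $\beta$ outside a finite union of hyperplanes (one per proper face of $\triangle_F$) forces $y^* = 0$. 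In the remaining case $t < m(v_x)$, no gradient truncation involves $y$, and --- using that $\partial F_t^\delta / \partial x_i \equiv 0$ whenever $\delta \subset \{\alpha_i = 0\}$ --- the system forces $F_t^\delta(x^*) = 0$ together with $\partial F_t^\delta / \partial x_i (x^*) = 0$ for every $i$, so $x^*$ is a singular point of $F_t^\delta$, excluded by complete non-degeneracy.

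For ($\Rightarrow$), I argue by contraposition. Given a singular point $x^* \in (\CC^*)^{n+1}$ of $F_t^\delta$ for some face $\delta$ of $\triangle_F$, I exhibit a direction $v$ for which the $v$-truncation of $(G_\beta)$ has the torus zero $(x^*, y^*)$ for any $y^* \in \CC^*$. When $\delta = \triangle_F$, the choice $v = (0, -1)$ yields the untruncated system $F_t = 0$, $x_i \partial F_t / \partial x_i = 0$, which is solved by $x^*$. When $\delta$ is a proper face meeting every $\{\alpha_i > 0\}$, take $v_x$ in the relative interior of the normal cone of $\delta$ and $t$ below every $m_i(v_x)$: the truncated system collapses to $F_t^\delta = 0$ and $x_i \partial F_t^\delta / \partial x_i = 0$, again solved by $x^*$.

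The main obstacle is the case where $\delta$ lies in a coordinate hyperplane $\{\alpha_i = 0\}$: the $i$-th gradient truncation then involves $F_t^{\delta_i}$ with $\delta_i \neq \delta$, and the singularity of $F_t^\delta$ at $x^*$ does not a priori yield $\partial F_t^{\delta_i} / \partial x_i (x^*) = 0$. To overcome this I would refine $v_x$ within the normal cone of $\delta$ so that each relevant $\delta_i$ is realized as a face of $\delta$ itself, exploiting the freedom in the choice of $v_x$ together with the fact that $x^*$ lies in the open torus, so the conclusion reduces to the case already treated and amounts to a direct application of \cite{Bern75}.
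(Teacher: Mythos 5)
The paper's proof works with a single support polytope $\triangle_\beta$ (the cone with apex $(0,\dots,0,1)$ over $\triangle_{F_t}\times\{0\}$) and restricts the whole system $(G_\beta)$ to each proper face of $\triangle_\beta$; those faces split cleanly into the base faces $\delta\times\{0\}$, which give $(G^\delta_0)$, and the cones over proper $\delta$, which give $(G^\delta_\beta)$. You instead run the Bernstein criterion equation by equation, taking the $v$-initial form of each polynomial with respect to its own Newton polytope, and enumerate directions $v=(v_x,v_y)$ by comparing $v_y$ with $m_i(v_x)$ and $m(v_x)$. Your ($\Leftarrow$) direction is essentially sound and uses the same Euler-type identity that the paper uses to confine $\overline\gamma_\delta(V(F_t^\delta))$ to a hyperplane; the parenthetical claim ``no gradient truncation involves $y$'' in the case $v_y<m(v_x)$ is not quite right (if $v_y=m_j(v_x)<m(v_x)$ the $j$-th truncation does involve $y$), but since $\partial F_t^\delta/\partial x_i\equiv 0$ for those $i$, the conclusion that $x^*$ is singular for $F_t^\delta$ still follows, so this is only a slip in exposition.

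The ($\Rightarrow$) direction is where there is a genuine gap, and you correctly identify its location but not a correct fix. When a singular face $\delta$ lies in some coordinate hyperplane $\{\alpha_j=0\}$, the per-equation $v$-truncation of the $j$-th gradient equation is $x_j\,\partial F_t^{\delta_j}/\partial x_j$ (or that minus $\beta_j y$), with $\delta_j$ the $v_x$-face of $\{\alpha\in\E(F_t):\alpha_j>0\}$. Your proposed repair --- ``refine $v_x$ within the normal cone of $\delta$ so that each relevant $\delta_i$ is realized as a face of $\delta$ itself'' --- cannot work: $\delta_i\subset\{\alpha_i>0\}$ while $\delta\subset\{\alpha_i=0\}$, so $\delta_i$ is disjoint from $\delta$ and can never be a face of it. If there is a single such $j$, choosing $v_y=m_j(v_x)$ makes the $j$-th truncation solvable in $y$ and the remaining equations collapse as desired; but when several coordinate hyperplanes contain $\delta$ with distinct $m_j(v_x)$, only one can be matched by $v_y$, and the others leave constraints $\partial F_t^{\delta_j}/\partial x_j(x^*)=0$ that the singularity of $F_t^\delta$ does not give. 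The paper sidesteps this entirely: restricting to the base face $\delta\times\{0\}$ of $\triangle_\beta$, the $j$-th gradient equation has \emph{no} monomial supported on that face (neither the apex $(0,\dots,0,1)$ nor any $(\alpha,0)$ with $\alpha_j\neq 0$), so it becomes the vacuous equation $0=0$ and the restricted system is exactly $(G^\delta_0)$. Either you should adopt the paper's face-of-$\triangle_\beta$ reading of B-genericity, in which case ($\Rightarrow$) is immediate, or you need a genuinely new argument to close this case in the per-equation formulation.
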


\begin{proof} 

The support polytope $\triangle_\beta$ of the system $(G_\beta)$ is the cone with apex $(0,\dotsc,0,1)$ over $\triangle_{F_t}\times \{0\}$.
In fact, $\triangle_\beta$ is the Newton polytope of all polynomial of $(G_\beta)$ but $F_t$.

The restrictions of $(G_\beta)$  to the faces of $\triangle_\beta$ are either of the form 

\[ (G^\delta_\beta)  \begin{cases}  F_t^\delta = 0 \\ \displaystyle x_1\frac{\partial F_t^\delta}{\partial x_1}=\beta_1 y 
\\ \displaystyle \vdots \\ \displaystyle x_{n+1}\frac{\partial F^\delta _t}{\partial x_{n+1}}= \beta_{n+1} y \end{cases} \]

or of the form, 

\[ (G^\delta_0)  \begin{cases}  F_t^\delta = 0 \\ \displaystyle x_1\frac{\partial F_t^\delta}{\partial x_1}=0
\\ \displaystyle \vdots \\ \displaystyle x_{n+1}\frac{\partial F^\delta _t}{\partial x_{n+1}}= 0 \end{cases} \]

for $\delta$ a face of  $\triangle_{F_t}$.

The system $(G^\delta_\beta)$ is the resctriction of $(G_\beta)$ to the cone with apex $(0,\dotsc,0,1)$ over a face $\delta$ of $\triangle_\beta$.

By definition, $F_t$ is B-generic if and only if for all proper faces $\delta$ of $\triangle_{F_t}$ the systems $(G^\delta_\beta)$ have no solution for a generic
$\beta$
and for all (not necessarily proper) faces   $\delta$ of $\triangle_{F_t}$ the systems  $(G^\delta_0)$  have no solution.

Note that $(G^\delta_0)$  has no solution if and only if $V(F^\delta_t)$ is non-singular in $(\CC^*)^{n+1}$.

Assume first that $F_t$ is B-generic. Then, in particular  for all (not necessarily proper) faces   $\delta$ of $\triangle_{F_t}$ the systems  $(G^\delta_0)$  have no solution.
So, for any face $\delta$, $V(F^\delta_t)$ is non-singular which is equivalent to $F_t$ being completely non-degenerate.

Assume now that $F_t$ is completely non-degenerate. We saw above that this implies that for all (not necessarily proper) faces   $\delta$ of $\triangle_{F_t}$ the systems  $(G^\delta_0)$  have no solution. We only need to check that for a generic $\beta$, the  systems $(G^\delta_\beta)$ have no solution for all proper faces $\delta$ of $\triangle_{F_t}$.  

Let us now fix a proper face $\delta$ of $\triangle_{F_t}$ and consider the system $(G^\delta_\beta)$.
Denote by $\overline\gamma_\delta$ the map $$\overline\gamma_\delta:(\mathbb C^*)^{n+1}\rightarrow\mathbb C^{n+1}$$ $$x\mapsto (x_1\frac{\partial F_t^\delta}{\partial x_1}, \dotsc , x_{n+1}\frac{\partial F_t^\delta}{\partial x_{n+1}}).$$
Note that $0 \in \overline\gamma_\delta(V(F_t^\delta))$ if and only if $V(F_t^\delta)$ is singular. 
 Let us write $F_t(x)= \sum_{\alpha\in\mathcal E(F_t)}a_\alpha  x^\alpha$.
Since $dim(\delta)<n+1$, $\delta$ is contained in a hyperplane and there exist $\mu\in\RR^{n+1}$ and $c\in\RR$ such that for any exponent $\alpha$ of $F_t^\delta$, $$\mu\cdot \alpha=c.$$
 
For any $x\in (\mathbb C^*)^{n+1}$, $$\mu\cdot \overline\gamma_\delta(x)=\sum_{\alpha\in\mathcal E(F_t^\delta)}a_\alpha (\mu\cdot\alpha) x^\alpha$$ $$=\sum_{\alpha\in\mathcal E(F_t^\delta)}a_\alpha c x^\alpha=c F_t^\delta(x).$$

In particular, if $x\in V(F_t^\delta)$, $\mu\cdot \overline\gamma_\delta(x)=0$. Then, $\overline\gamma_\delta(V(F_t^\delta))$ is contained in the linear hyperplane $H^\delta$ orthogonal to $\mu$.

Then, for any $\delta$ the system $(G^\delta_\beta)$ has no solution in the torus if the line $(\beta_1y,\dotsc,\beta_{n+1}y)$ is not contained in $H^\delta$. Since there exists a finite number of hyperplanes containing all the proper faces of $\triangle_{F_t}$,
 $$\cup_{\delta\subset\triangle_{F_t}}\overline\gamma_\delta(V(F_t^\delta))$$
is contained in a finite union of linear hyperplanes. Then, for a generic $[\beta]$ none of the systems in $\{(G^\delta_\beta),\delta \mbox{ proper face of } \triangle_{F_t} \}$ has a solution. Thus $F_t$ is B-generic.
\end{proof}

From now on we consider completely non-degenerate polynomials.

\begin{prop} A polynomial $F_t$ is completely non-degenerate if and only if the degree of the map $\gamma_t$ is $$(n+1)! vol(\triangle_{F_t}).$$
\end{prop}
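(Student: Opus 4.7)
The plan is to apply the Bernstein--Kushnirenko theorem to the auxiliary system $(G_\beta)$. By Proposition~\ref{Prop:Nondegenerate} it is enough to prove that $F_t$ is B-generic if and only if $\deg(\gamma_t) = (n+1)!\,vol(\triangle_{F_t})$.

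First, for a generic $[\beta]\in\CC\PP^n$ the fiber $\gamma_t^{-1}([\beta])$ is in bijection with the set of torus solutions $(x,y)\in(\CC^*)^{n+2}$ of $(G_\beta)$: for $x\in V(F_t)$ with $\gamma_t(x)=[\beta]$, fixing any index $i$ with $\beta_i\neq 0$, the value $y = x_i\partial_i F_t(x)/\beta_i$ is uniquely determined. Consequently, $\deg(\gamma_t)$ equals the number of torus solutions of $(G_\beta)$. By Remark~\ref{Rem:Multiplicative-Translation-Invariance-2}, multiplication of $F_t$ by a suitable monomial $x^\omega$ leaves both $\gamma_t$ and $vol(\triangle_{F_t})$ unchanged, so we may arrange that every $x_i\partial_i F_t$ has Newton polytope equal to $\triangle_{F_t}$. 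In the variables $(x,y)$, the polynomial $F_t$ then has Newton polytope $\triangle_{F_t}\times\{0\}\subset\RR^{n+2}$ and each $x_i\partial_i F_t-\beta_i y$ has Newton polytope exactly the pyramid $\triangle_\beta$ of the preceding proof. Bernstein's theorem therefore delivers
\[
\deg(\gamma_t)\,\leq\,MV\bigl(\triangle_{F_t}\times\{0\},\,\triangle_\beta,\,\dotsc,\,\triangle_\beta\bigr),
\]
with equality (for generic $\beta$) if and only if the system is Bernstein non-degenerate, which is exactly the condition that $F_t$ be B-generic.

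The mixed volume I would compute by slicing. Since the horizontal section of $\triangle_\beta$ at height $s\in[0,1]$ is $(1-s)\triangle_{F_t}$, the horizontal section of $\lambda_0(\triangle_{F_t}\times\{0\})+\mu\triangle_\beta$ at height $s\in[0,\mu]$ is $(\lambda_0+\mu-s)\triangle_{F_t}$, where $\mu=\lambda_1+\cdots+\lambda_{n+1}$. Integration in $s$ yields
\[
vol\bigl(\lambda_0(\triangle_{F_t}\times\{0\})+\mu\triangle_\beta\bigr)\,=\,\frac{(\lambda_0+\mu)^{n+2}-\lambda_0^{n+2}}{n+2}\,vol(\triangle_{F_t}).
\]
Reading off the coefficient of $\lambda_0\mu^{n+1}$ gives $vol(\triangle_{F_t})$, and the multinomial expansion of $\mu^{n+1}$ contributes a factor $(n+1)!$ for the monomial $\lambda_1\cdots\lambda_{n+1}$, so the coefficient of $\lambda_0\lambda_1\cdots\lambda_{n+1}$ in $vol\bigl(\sum_i\lambda_i K_i\bigr)$ equals $(n+1)!\,vol(\triangle_{F_t})$, which is exactly the Bernstein count. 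Combined with the preceding step and Proposition~\ref{Prop:Nondegenerate}, both implications of the equivalence follow.

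The main obstacle is the converse direction: it depends on the sharp form of Bernstein's theorem (equality in the mixed volume bound if and only if the system has no torus solutions on any proper face of its Newton polytope), and on the generic simplicity of solutions that ensures the fiber count is not altered by multiplicities. The mixed-volume computation itself is routine thanks to the pyramid structure of $\triangle_\beta$; the one technical point to handle carefully is the preliminary translation by $x^\omega$, which is needed precisely so that each $x_i\partial_i F_t$ has Newton polytope $\triangle_{F_t}$ and Bernstein's bound takes the clean form above.
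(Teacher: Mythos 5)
Your proof follows the paper's approach exactly: the paper simply cites Proposition~\ref{Prop:Nondegenerate} and says the result follows from Bernstein's theorem applied to the systems $(G_\beta)$, and you spell out what that application entails. The extra content you supply — identifying $\deg(\gamma_t)$ with the number of torus solutions of $(G_\beta)$, using a monomial translation so that each $x_i\partial_i F_t - \beta_i y$ has Newton polytope exactly the pyramid $\triangle_\beta$, and then computing $MV(\triangle_{F_t}\times\{0\},\triangle_\beta,\dotsc,\triangle_\beta) = (n+1)!\,vol(\triangle_{F_t})$ by slicing the Minkowski sum — is all correct and makes explicit the Bernstein count the paper takes for granted; the only point to keep visible (which you do flag) is that for generic $\beta$ the solutions of $(G_\beta)$ are simple, so the fiber count with multiplicities equals the set-theoretic fiber cardinality defining $\deg(\gamma_t)$.
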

\begin{proof}
It follows directly from  Proposition~\ref{Prop:Nondegenerate} applying Bernstein theorem to the systems $(G_\beta)$. 
\end{proof}
 
 As a direct consequence of Proposition~\ref{Prop:Nondegenerate}, we have the following corollary.

\begin{cor}\label{Rem:complexamoeba} $F_t$ is completely non-degenerate if and only if $$ \int_{\mathcal{A}(F_t)}  K = (-1)^n a_n (n+1)! vol(\triangle_F) vol(\CC \PP^n)$$ which does not depend on $t$.
 \end{cor}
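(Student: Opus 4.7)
The plan is to chain Definition~\ref{Def:Amoebas-Complex-Curvature} with the immediately preceding proposition (identifying completely non-degenerate polynomials with those whose logarithmic Gauss map has maximal degree). First I would rewrite
\[
\int_{\mathcal{A}(F_t)} K \;=\; (-1)^n a_n \int_{\CC\PP^n}\#\gamma_t^{-1}(\beta)\, dp,
\]
and recall that for generic $\beta\in\CC\PP^n$ the cardinality $\#\gamma_t^{-1}(\beta)$ equals the topological degree of $\gamma_t$. If $F_t$ is completely non-degenerate, the preceding proposition gives $\deg \gamma_t = (n+1)!\,vol(\triangle_{F_t})$, so the integrand is almost everywhere equal to this constant; integrating produces $(-1)^n a_n (n+1)!\,vol(\triangle_F)\,vol(\CC\PP^n)$. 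Independence of $t$ is automatic since the Newton polytope of $F_t$ coincides with that of $F$ for all $0<t\ll 1$.

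For the converse direction, I would invoke the Bernstein bound on the system $(G_\beta)$ used in the proof of Proposition~\ref{Prop:Nondegenerate}: for every $\beta$, the number of torus solutions of $(G_\beta)$ is at most the mixed volume $(n+1)!\,vol(\triangle_{F_t})$, and this upper bound is attained on a set of full measure in $\CC\PP^n$ if and only if $F_t$ is B-generic. Consequently, if the integral above realises the value $(-1)^n a_n (n+1)!\,vol(\triangle_F)\,vol(\CC\PP^n)$, the pointwise bound must be saturated for almost every $\beta$, which forces $F_t$ to be B-generic and hence completely non-degenerate by Proposition~\ref{Prop:Nondegenerate}.

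There is no genuine obstacle here, only some bookkeeping: one must check that the generic fiber of the logarithmic Gauss map $\gamma_t$ in $(\CC^*)^{n+1}$ is exactly what Bernstein's theorem counts for $(G_\beta)$, rather than for the homogenised system $(G'_\beta)$. This is precisely the content of Remark~\ref{Rem:Multiplicative-Translation-Invariance-2}, which ensures that replacing $(G'_\beta)$ by $(G_\beta)$ does not alter the count and that the fibers are multiplicatively translation invariant. Once this identification is in hand, the equivalence reduces to the one-line application of Bernstein's theorem sketched above.
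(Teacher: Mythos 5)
Your proof is correct and follows essentially the same route the paper intends: the corollary is presented there as a direct consequence of Proposition~\ref{Prop:Nondegenerate} and the unnamed proposition on $\deg\gamma_t$, and you simply spell out that chain, combining Definition~\ref{Def:Amoebas-Complex-Curvature} with Bernstein's theorem for both implications. One small slip: the identification of the generic fiber of $\gamma_t$ with the solution count of $(G_\beta)$ rather than $(G'_\beta)$ is handled in the text by the remark that one only considers solutions in the torus (which lets $y$ be eliminated), whereas Remark~\ref{Rem:Multiplicative-Translation-Invariance-2} is about invariance under multiplication by a monomial $x^\omega$, not about the $(G_\beta)/(G'_\beta)$ equivalence; this does not affect the substance of your argument.
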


We have then the following inequality, similar to (\ref{ineqcurv}) for the logarithmic curvatures of the real and complex parts of a real algebraic hypersurface: 
\begin{thm}\label{Prop:Inequality} For any completely non-degenerate $F_t\in\RR[x_1,\dotsc,x_{n+1}]$,
\begin{equation}\label{Eq:Inequality}
\frac{\sigma_{2n}}{\sigma_{n}} \int_{\mathcal{A}^\RR(F_t)}\vert k \vert\leq  \int_{\mathcal{A}(F_t)}\vert K \vert.
\end{equation}
 
\end{thm}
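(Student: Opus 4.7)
The plan is to express each side of~(\ref{Eq:Inequality}) via fiber-counting for the (real and complex) logarithmic Gauss maps, to bound the real fibers by the complex Bernstein count, and then to reduce the claim to an elementary identity among $\sigma_n$, $\sigma_{2n}$, $\sigma_{2n+1}$ and $a_n$.

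By Remark~\ref{Def:Amoebas-Real-Curvature} one has
\begin{equation*}
\int_{\mathcal{A}^\RR(F_t)} |k|\, dv \;=\; \int_{\RR\PP^n} \#(\gamma_t^\RR)^{-1}([\beta])\, ds.
\end{equation*}
For $[\beta]\in\RR\PP^n\subset\CC\PP^n$ every element of $(\gamma_t^\RR)^{-1}([\beta])$ is a real solution of the system $(G_\beta)$, hence in particular a complex solution in $(\CC^*)^{n+1}$. Since $F_t$ is completely non-degenerate, Proposition~\ref{Prop:Nondegenerate} ensures that it is B-generic; Bernstein's theorem applied to $(G_\beta)$ then yields a proper complex-algebraic subvariety $Z\subset\CC\PP^n$ off which $\#\gamma_t^{-1}([\beta])=(n+1)!\,vol(\triangle_F)$. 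Because $Z\cap\RR\PP^n$ has measure zero in $\RR\PP^n$, the inequality $\#(\gamma_t^\RR)^{-1}([\beta])\leq (n+1)!\,vol(\triangle_F)$ holds for almost every $[\beta]\in\RR\PP^n$, and therefore
\begin{equation*}
\int_{\mathcal{A}^\RR(F_t)} |k|\, dv \;\leq\; (n+1)!\,vol(\triangle_F)\cdot vol(\RR\PP^n).
\end{equation*}

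On the complex side, Corollary~\ref{Rem:complexamoeba} (combined with the positivity of $(-1)^n K$) gives directly
\begin{equation*}
\int_{\mathcal{A}(F_t)} |K| \;=\; a_n\,(n+1)!\,vol(\triangle_F)\cdot vol(\CC\PP^n),
\end{equation*}
so it suffices to verify the identity
\begin{equation*}
\frac{\sigma_{2n}}{\sigma_n}\, vol(\RR\PP^n) \;=\; a_n\, vol(\CC\PP^n).
\end{equation*}
Using $vol(\RR\PP^n)=\sigma_n/2$ together with $vol(\CC\PP^n)=\sigma_{2n+1}/(2\pi)$ (via the Hopf fibration $S^1\hookrightarrow S^{2n+1}\to\CC\PP^n$), both sides collapse to $\sigma_{2n}/2$; the right-hand side by the very definition $a_n=\pi\sigma_{2n}/\sigma_{2n+1}$. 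Multiplying the previous estimate by $\sigma_{2n}/\sigma_n$ then produces~(\ref{Eq:Inequality}).

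The only mildly delicate step is the almost-everywhere upper bound on the real fibers, but this is automatic from the Zariski-closedness of the exceptional locus of Bernstein's theorem; everything else is a direct assembly of the B-generic machinery already established. In particular, the inequality turns out to arise essentially from an \emph{equality} in the constants, with the slack concentrated entirely in the inequality between the number of real and complex isolated solutions of $(G_\beta)$.
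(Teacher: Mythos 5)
Your proof is correct and follows essentially the same route as the paper: both bound the real fiber count of the logarithmic Gauss map by the complex degree $(n+1)!\,vol(\triangle_F)$ (via Bernstein/B-genericity), use $\int_{\mathcal{A}(F_t)}|K| = a_n\,deg(\gamma_t)\,vol(\CC\PP^n)$, and then verify the identity in the constants with $vol(\RR\PP^n)=\sigma_n/2$ and $vol(\CC\PP^n)=\sigma_{2n+1}/\sigma_1$. Your added remark on the Zariski-closedness of the exceptional locus merely makes explicit the almost-everywhere bound that the paper leaves implicit.
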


\begin{proof} 
We have seen that $ \int_{\mathcal{A}(F_t)}  K = (-1)^n a_n (n+1)! vol(\triangle_F) vol(\CC \PP^n)$.
For $x \in \RR \PP^n$, the cardinality of the real fiber $(\gamma_t^{\RR})^{-1}$ is smaller than the cardinality of the complex one. So we similarly have that $ \int_{\mathcal{A}^{\RR}(F_t)}\vert k \vert \leq deg(\gamma_t) vol(\RR \PP^n)$, with $vol(\CC \PP^n) = \frac{\sigma_{2n +1}}{\sigma_1}$ and $vol(\RR \PP^n) = \frac{\sigma_n}{2}$ from which the inequality follows.
\end{proof}

 \begin{cor} There is equality in the above theorem if and only if the map $\gamma_t$ is totally real, {\it i.e.}, $\gamma_t^{-1}(x) \subset \RR V(F_t)$ for  $x \in \RR \PP^n$.
 \end{cor}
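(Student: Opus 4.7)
The plan is to trace back through the proof of Theorem~\ref{Prop:Inequality} and identify the sole inequality step, then interpret the equality case geometrically. Concretely, the proof of Theorem~\ref{Prop:Inequality} rests on the two identities
\[
\int_{\mathcal{A}(F_t)} K \;=\; (-1)^n a_n\,(n+1)!\,vol(\triangle_F)\,vol(\CC\PP^n),
\qquad
\int_{\mathcal{A}^\RR(F_t)} |k|\;=\;\int_{\RR\PP^n} \#(\gamma_t^\RR)^{-1}(\beta)\,ds,
\]
together with the pointwise comparison of fibers $\#(\gamma_t^\RR)^{-1}(\beta)\le \#\gamma_t^{-1}(\beta)$, which comes from the tautological inclusion $\RR V(F_t)\subset V(F_t)$. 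The computation of $vol(\CC\PP^n)$ and $vol(\RR\PP^n)$ then packages this into the stated inequality. Hence the only place where information is lost is in the fiber-wise estimate above, integrated against $\RR\PP^n$.

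First I would make explicit that for completely non-degenerate $F_t$, the generic complex fiber of $\gamma_t$ has cardinality $\deg(\gamma_t)=(n+1)!\,vol(\triangle_F)$ by Bernstein's theorem applied to $(G_\beta)$, so
\[
\int_{\RR\PP^n}\#\gamma_t^{-1}(\beta)\,ds \;=\;\deg(\gamma_t)\,vol(\RR\PP^n).
\]
Plugging this into the previous estimate yields that equality in (\ref{Eq:Inequality}) is equivalent to
\[
\int_{\RR\PP^n}\bigl(\#\gamma_t^{-1}(\beta)-\#(\gamma_t^\RR)^{-1}(\beta)\bigr)\,ds=0.
\]
Since the integrand is a non-negative integer-valued function on $\RR\PP^n$, it must vanish almost everywhere; by Sard-type genericity of the complex fibers of $\gamma_t$, this is equivalent to saying that for a generic $\beta\in\RR\PP^n$ every point of $\gamma_t^{-1}(\beta)$ already lies in $\RR V(F_t)$, i.e.\ the logarithmic Gauss map is totally real in the sense of the statement.

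Conversely, if $\gamma_t^{-1}(\beta)\subset \RR V(F_t)$ for generic $\beta\in\RR\PP^n$, then $\#(\gamma_t^\RR)^{-1}(\beta)=\#\gamma_t^{-1}(\beta)=\deg(\gamma_t)$ on a full measure subset of $\RR\PP^n$, so integrating and multiplying by $\sigma_{2n}/\sigma_n$ converts (\ref{Eq:Inequality}) into an equality using Corollary~\ref{Rem:complexamoeba}. The only delicate point is handling the measure-zero bad locus: one must observe that the branch locus of $\gamma_t$ in $\CC\PP^n$ projects to a proper real algebraic subset of $\RR\PP^n$, hence has zero $ds$-measure, so the generic statement and the full statement $\gamma_t^{-1}(x)\subset\RR V(F_t)$ for $x\in\RR\PP^n$ (understood outside this measure-zero exceptional locus) are interchangeable. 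That measure-theoretic bookkeeping is the main, and only genuine, obstacle in the argument; the rest is bookkeeping of the proof of Theorem~\ref{Prop:Inequality}.
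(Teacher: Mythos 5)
Your proposal is correct and follows the only natural route: the paper states the corollary without further proof precisely because it is a transparent unwinding of the proof of Theorem~\ref{Prop:Inequality}, and you have reconstructed that unwinding faithfully. You correctly isolate the single lossy step (the fiberwise inequality $\#(\gamma_t^\RR)^{-1}(\beta)\le\#\gamma_t^{-1}(\beta)$ coming from $\RR V(F_t)\subset V(F_t)$), observe that equality of integrals forces equality of the integer-valued fiber counts almost everywhere, and translate this into the total-reality condition, with the appropriate caveat about the measure-zero branch locus. This matches the paper's implicit argument.
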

 
\begin{rem}
One can prove (see \cite{PasRis10}) that for a non-singular real curve, the maximality for the above inequality characterises the Harnack curves in the sense of \cite{Mikh00}.
\end{rem}


\section{Complex and real  total curvature of tropical hypersurfaces}\label{Sec:Tropical-Curvature}


\subsection{Complex total curvature}\label{Subsec:Complex-Tropical-Curvature}

Let $f$ be a tropical polynomial, $F \in \KK[x]$ realising $f$ and such that $F_t$ is completely non-degenerate for $0 < t \ll  1$. Since the total curvature of the Amoeba $\mathcal{A}(F_t)$ does not depend on $t$ for $0 < t \ll  1$, we define the total curvature of the tropical variety $V(f)$ by passing to the limit in the trivial way:

\begin{defi}\label{Def:Complex-Tropical-Curvature}Let $f$ be a tropical polynomial. We define the complex total curvature of $V(f)$ as 
\begin{equation}\label{Eq:Complex-Tropical-Curvature}\int _{V(f)} K  : =  \int_{\mathcal{A}(F_t)} K  = (-1)^n a_n (n+1)! vol(\triangle_F) vol(\CC \PP^n).\end{equation} 
\end{defi}

Notice that $(-1)^n K$ is a positive function, then $\vert K \vert = (-1)^n K$, therefore $\int _{V(f)}\vert K \vert=vol(\CC\PP^n)\times (n+1)! vol(\triangle_f)\times a_n.$ 

\begin{cor}\label{Cor:Hyperplane-Complex-Tropical-Curvature} For any primitive tropical hypersurface $H$, 
\begin{equation}\label{Eq:Hyperplane-Complex-Tropical-Curvature}
\int _{H} K =(-1)^n a_n vol(\CC\PP^n).
\end{equation}
\end{cor}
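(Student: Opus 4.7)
The plan is to specialize Definition~\ref{Def:Complex-Tropical-Curvature} to the primitive case. By the definition of a primitive tropical hypersurface stated at the beginning of Subsection~\ref{Subsec:Tropical-Hypersurfaces}, the Newton polytope $\triangle_f$ of a polynomial realising $H$ is a primitive simplex of dimension $n+1$, i.e.\ an $(n+1)$-simplex whose edge vectors at any vertex form a $\ZZ$-basis of $\ZZ^{n+1}$.

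The only nontrivial ingredient is the elementary fact that a primitive $(n+1)$-simplex in $\RR^{n+1}$ has Euclidean volume $\frac{1}{(n+1)!}$. This follows because the volume of a simplex with vertex $v_0$ and edges $v_i - v_0$ equals $\frac{1}{(n+1)!}\bigl|\det(v_1-v_0,\dotsc,v_{n+1}-v_0)\bigr|$, and the primitivity condition forces this determinant to be $\pm 1$.

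Once this is established, I would simply substitute $\text{vol}(\triangle_f) = \frac{1}{(n+1)!}$ into equation~(\ref{Eq:Complex-Tropical-Curvature}) of Definition~\ref{Def:Complex-Tropical-Curvature}, obtaining
\[
\int_H K = (-1)^n a_n (n+1)!\cdot\frac{1}{(n+1)!}\cdot \text{vol}(\CC\PP^n) = (-1)^n a_n\,\text{vol}(\CC\PP^n),
\]
which is exactly the claimed formula. There is no genuine obstacle here; the statement is really just a direct instantiation of the preceding definition in the primitive case, and the verification amounts to recognising that the combinatorial normalisation $(n+1)!\,\text{vol}(\triangle_f)$ equals the number of primitive simplices in any unimodular subdivision of $\triangle_f$, which is $1$ when $\triangle_f$ itself is primitive.
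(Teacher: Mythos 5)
Your proof is correct and coincides with the argument the paper leaves implicit: the corollary is exactly Definition~\ref{Def:Complex-Tropical-Curvature} specialised to the case where $\triangle_f$ is a primitive $(n+1)$-simplex, so $(n+1)!\,\mathrm{vol}(\triangle_f)=1$ and the formula follows.
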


\begin{cor}\label{Cor:Complex-Tropical-Curvature} Let $f$ be a tropical polynomial and let $\textbf{v}_1,\dots, \textbf{v}_r$ be the set of vertices of $V(f)$. Then $$\int _{V(f)} K =\sum_{i=1}^r\int_{V(In_{\textbf{v}_i}f)} K. $$
\end{cor}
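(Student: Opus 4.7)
The plan is to apply the explicit formula from Definition~\ref{Def:Complex-Tropical-Curvature} (equation~(\ref{Eq:Complex-Tropical-Curvature})) to each of the two sides and reduce the identity to an additivity statement about volumes of cells in the dual subdivision $\Gamma_f$.

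First I would observe that for each vertex $\textbf{v}_i$ of $V(f)$, the initial form $In_{\textbf{v}_i} f$ is itself a tropical polynomial whose Newton polytope is exactly the dual cell $\check{\textbf{v}_i}$ of $\textbf{v}_i$ in $\Gamma_f$. Since $\textbf{v}_i$ is a $0$-dimensional cell of $V(f)$ and $\dim(c)+\dim(\check{c})=n+1$, the cell $\check{\textbf{v}_i}$ is $(n+1)$-dimensional, i.e.\ it is a top-dimensional cell of the regular subdivision $\Gamma_f$ of $\triangle_f$. Hence Definition~\ref{Def:Complex-Tropical-Curvature} applies to each $In_{\textbf{v}_i} f$ (after choosing a lift $F^{(i)} \in \KK[x]$ realising it with $(F^{(i)})_t$ completely non-degenerate, which exists by a generic perturbation of its coefficients; the right-hand side of~(\ref{Eq:Complex-Tropical-Curvature}) depends only on $vol(\check{\textbf{v}_i})$, so the choice is irrelevant).

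Next, I would note that the top-dimensional cells $\{\check{\textbf{v}_i}\}_{i=1}^r$ of $\Gamma_f$ form a subdivision of $\triangle_f$, and therefore
\begin{equation*}
vol(\triangle_f) \;=\; \sum_{i=1}^r vol(\check{\textbf{v}_i}).
\end{equation*}
Multiplying both sides by $(-1)^n a_n (n+1)!\, vol(\CC\PP^n)$ and applying formula~(\ref{Eq:Complex-Tropical-Curvature}) to $V(f)$ on the left and to each $V(In_{\textbf{v}_i} f)$ on the right yields exactly the desired equality.

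There is no real obstacle here: the whole argument is essentially the observation that the complex tropical total curvature, as defined, is a linear functional of the volume of the Newton polytope, combined with the dimensional duality $\dim(c) + \dim(\check{c}) = n+1$ which guarantees that the cells dual to the vertices of $V(f)$ are precisely the maximal cells of $\Gamma_f$. The only point that requires a moment of care is justifying that Definition~\ref{Def:Complex-Tropical-Curvature} can indeed be applied to each $In_{\textbf{v}_i} f$, which is handled by Remark~\ref{Rem:Tprimitive-nondegenerate} or by a direct genericity argument on the lift.
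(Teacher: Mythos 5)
Your proof is correct and is essentially the argument the paper leaves implicit (the corollary is stated without proof, as a direct consequence of Definition~\ref{Def:Complex-Tropical-Curvature}). The key facts you use — that $\triangle_{In_{\textbf{v}_i} f} = \check{\textbf{v}_i}$ is a top-dimensional cell of $\Gamma_f$ by the duality $\dim(c) + \dim(\check{c}) = n+1$, that these cells partition $\triangle_f$ so the volumes add, and that formula~(\ref{Eq:Complex-Tropical-Curvature}) is linear in $vol(\triangle_f)$ — are exactly what makes the identity immediate, and your remark that the choice of lift $F^{(i)}$ is irrelevant (since the right-hand side of~(\ref{Eq:Complex-Tropical-Curvature}) depends only on the Newton polytope) correctly addresses the only point requiring care.
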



\subsection{Total curvature of real tropical hypersurfaces}\label{Subsec:Real-Tropical-Curvature}

\begin{defi}\label{Def:Real-Tropical-Curvature} 

Let $F \in  \KK_{\RR}[x_1, \dots, x_{n+1}] $ be a real polynomial, $f = Trop (F)$ its tropicalization.\\
The real total curvature of $V^\RR(f)$ is defined as 
$$\int _{V^\RR(f)}\vert k \vert:= \limsup_{t \rightarrow 0} \int_{\mathcal{A}^\RR(F_t)}\vert k \vert.$$ 
\end{defi}

\begin{rem}\label{Rem:limit} The proof of Proposition~\ref{totcurvR} below implies that if $V(f)$ is non-singular,
 $\int_{\mathcal{A}^\RR(F_t)}\vert k \vert$ has a limit when $t \rightarrow 0$.
\end{rem}

Recall the following diagram:

\begin{equation} \label{diag1}
\xymatrix{ \RR V(F_t) \ar[rd]^{\gamma_t^\RR} \ar[d]^{\Log_t} \\
\mathcal{A}^\RR(F_t) \ar[r] _{g_t} & \RR \PP^n}
\end{equation}
where $g_t$ is the Gauss map and $\gamma_t^\RR=g_t\circ \Log_t$ the logarithmic Gauss map, defined as:

$$\gamma_t^\RR (x)=[x_i\frac{\partial F_t}{\partial x_i}].$$

It follows immediately from Theorem~\ref{Prop:Inequality} and Definitions~\ref{Def:Complex-Tropical-Curvature} and~\ref{Def:Real-Tropical-Curvature} that real and complex tropical curvatures satisfy an inequality similar to Inequality~(\ref{ineqcurv}).

\begin{thm}\label{Thm:Tropical-Inequality}Let $F \in \KK_{\RR}[x_1, \dots, x_{n+1}] $ be a real polynomial and $f= Trop (F)$ be its tropicalisation. We have  
\begin{equation}\label{Eq:Tropical-Inequality}
\frac{\sigma_{2n}}{\sigma_n}\int_{V^\RR(f)}\vert k\vert\leq\int_{V(f)}\vert K\vert
\end{equation}
\end{thm}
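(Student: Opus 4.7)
The plan is to deduce this theorem directly by applying Theorem~\ref{Prop:Inequality} pointwise in $t$ and then passing to the limit as $t \to 0$, exploiting the fact that the right-hand side is actually constant in $t$.

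First I would choose a polynomial $F \in \KK_\RR[x_1,\dots,x_{n+1}]$ realising $f$, such that $F_t$ is completely non-degenerate for $0 < t \ll 1$ (a generic choice of coefficients in $\KK_\RR$ suffices, or one can invoke Remark~\ref{Rem:Tprimitive-nondegenerate} in the non-singular case). For every such $t$, Theorem~\ref{Prop:Inequality} yields
\begin{equation*}
\frac{\sigma_{2n}}{\sigma_n}\int_{\mathcal{A}^\RR(F_t)}\vert k\vert \leq \int_{\mathcal{A}(F_t)}\vert K\vert.
\end{equation*}

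Next, by Corollary~\ref{Rem:complexamoeba}, the right-hand side equals $a_n (n+1)! \operatorname{vol}(\triangle_F) \operatorname{vol}(\CC\PP^n)$, which is independent of $t$. By Definition~\ref{Def:Complex-Tropical-Curvature} (and the remark that $\vert K\vert = (-1)^n K$), this constant value is precisely $\int_{V(f)}\vert K\vert$.

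Finally, I take the $\limsup$ as $t \to 0$ on both sides of the inequality. The right-hand side is unchanged since it is constant, while the left-hand side becomes $\frac{\sigma_{2n}}{\sigma_n}\int_{V^\RR(f)}\vert k\vert$ by Definition~\ref{Def:Real-Tropical-Curvature}. This produces the desired inequality. There is essentially no obstacle in this argument: all the work was done in Theorem~\ref{Prop:Inequality} and Corollary~\ref{Rem:complexamoeba}, and the $\limsup$ operation is compatible with the inequality because the dominating term is a constant. The only minor point to verify is that $F_t$ remains completely non-degenerate for all sufficiently small $t > 0$, which follows from the openness of the non-degeneracy condition or, in the non-singular tropical case, directly from Remark~\ref{Rem:Tprimitive-nondegenerate}.
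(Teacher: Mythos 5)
Your proposal is correct and follows the same route as the paper: the paper's own proof is literally a one-line remark that the statement ``follows immediately from Theorem~\ref{Prop:Inequality} and Definitions~\ref{Def:Complex-Tropical-Curvature} and~\ref{Def:Real-Tropical-Curvature},'' which is exactly the pointwise-in-$t$ inequality plus passage to the $\limsup$ against a $t$-independent right-hand side that you spell out. One small clarification: in the theorem (and in Definition~\ref{Def:Real-Tropical-Curvature}) the polynomial $F$ is given, not chosen, so you do not get to pick $F$ for convenience; the tacit hypothesis throughout this section is that $F_t$ is completely non-degenerate for $0<t\ll 1$, which is precisely what makes Theorem~\ref{Prop:Inequality} and Corollary~\ref{Rem:complexamoeba} applicable --- you correctly flag this point at the end.
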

\qed

We now establish one the main results of this article; namely that the Inequality~(\ref{Eq:Tropical-Inequality}) of Theorem~\ref{Thm:Tropical-Inequality} is an equality for real non-singular tropical hypersurfaces (Theorem~\ref{Thm:Tropical-equality}).
Let us first look at the case of a primitive hypersurface.

\begin{prop}\label{Prop:Realcurv} Let $f$ be a primitive tropical polynomial, tropicalisation of  $F \in  \KK_{\RR}[x_1, \dots, x_{n+1}] $. Then,

\begin{equation}\label{Eq:Realcurv}
 \int _{V^\RR(f)}\vert k \vert=vol(\RR\PP^n)=\frac{\sigma_n}{2}.
\end{equation}
\end{prop}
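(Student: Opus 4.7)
The plan is to combine the upper bound from Theorem~\ref{Thm:Tropical-Inequality} with a direct computation of the amoeba curvature to force equality. For the upper bound, primitivity gives $(n+1)!\mathrm{vol}(\triangle_f)=1$, so Definition~\ref{Def:Complex-Tropical-Curvature} reduces to $\int_{V(f)}|K|=a_n\,\mathrm{vol}(\CC\PP^n)$. Plugging in $a_n=\pi\sigma_{2n}/\sigma_{2n+1}$ and $\mathrm{vol}(\CC\PP^n)=\sigma_{2n+1}/\sigma_1=\sigma_{2n+1}/(2\pi)$ collapses this to $\sigma_{2n}/2$, and Theorem~\ref{Thm:Tropical-Inequality} then yields $\int_{V^\RR(f)}|k|\le\sigma_n/2$.

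For the matching lower bound, the plan is to pick a real $F\in\KK_\RR[x]$ realising $f$ and show that $\int_{\mathcal{A}^\RR(F_t)}|k|=\sigma_n/2$ already for every sufficiently small $t>0$ (so the $\limsup$ of Definition~\ref{Def:Real-Tropical-Curvature} trivially equals $\sigma_n/2$). Primitivity implies non-singularity, so by Remark~\ref{Rem:Tprimitive-nondegenerate} $F_t$ is completely non-degenerate, and the degree formula for $\gamma_t$ stated just before Corollary~\ref{Rem:complexamoeba} gives $\deg(\gamma_t)=(n+1)!\,\mathrm{vol}(\triangle_{F_t})=1$. Thus $\gamma_t\colon V(F_t)\to\CC\PP^n$ is birational and defined over $\RR$.

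The key step is then a conjugation argument: for a generic real $\beta\in\RR\PP^n$ the complex fibre $\gamma_t^{-1}(\beta)$ is a single point $x$, and since $F_t\in\RR[x]$ commutes with conjugation we have $\gamma_t(\bar x)=\bar\beta=\beta$; uniqueness forces $\bar x=x$, so $x\in\RR V(F_t)$ and $\#(\gamma_t^\RR)^{-1}(\beta)=1$ almost everywhere on $\RR\PP^n$. Using the integral formula of Remark~\ref{Def:Amoebas-Real-Curvature}, this gives $\int_{\mathcal{A}^\RR(F_t)}|k|=\mathrm{vol}(\RR\PP^n)=\sigma_n/2$ uniformly in $t$.

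The principal subtlety I expect is making sure the conjugation argument really yields a single real fibre everywhere it needs to, and not merely a bound: this works precisely because degree~$1$ of $\gamma_t$ leaves no room for a pair of non-real conjugate preimages. In this sense, primitivity is exactly the situation where the complex logarithmic Gauss map is forced to be ``totally real'' in the sense of the corollary following Theorem~\ref{Prop:Inequality}, which is the mechanism turning the inequality of Theorem~\ref{Thm:Tropical-Inequality} into an equality.
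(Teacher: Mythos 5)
Your proof is correct and takes essentially the same approach as the paper's: both hinge on the fact that primitivity forces $\deg(\gamma_t)=1$ by Bernstein's theorem, and then conclude that for generic real $\beta$ the unique complex preimage must itself be real, so $\#(\gamma_t^{\RR})^{-1}(\beta)=1$ almost everywhere. The only cosmetic difference is that where you invoke the Galois action of complex conjugation on the fibre directly, the paper phrases the same observation as a parity congruence $\#(\gamma_t^{\RR})^{-1}(\beta)\equiv\#(\gamma_t)^{-1}(\beta)\bmod 2$ combined with $\#(\gamma_t^{\RR})^{-1}(\beta)\le 1$; your preliminary upper-bound paragraph via Theorem~\ref{Thm:Tropical-Inequality} is harmless but redundant once the fibre count is pinned down exactly.
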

\begin{proof}

We have $\int_{{\mathcal A}^{\RR}(F_t) }\vert k \vert = $vol$(Im(g_t))$
 by definition. But the map $\gamma_t$ is generically of degree one by Berstein's theorem, and for a generic $\beta \in \RR \PP^n$ (namely in the complementary of the set of normal directions to non-compact cells of the tropical variety $V(f)$), we have  $\#(\gamma_t^{\RR})^{-1}(\beta) \leq \#(\gamma_t)^{-1}(\beta) = 1$ 
and $\#(\gamma_t^{\RR})^{-1}(\beta) \equiv \#(\gamma_t)^{-1}(\beta) \bmod 2$; therefore $\#(\gamma_t^{\RR})^{-1}(\beta) = 1$ and we have that vol($Im(g_t)) = $vol$(Im(\gamma_t^{\RR}))=$ vol$(\RR \PP^n) = \frac{\sigma_n}{2}$, and (\ref{Eq:Realcurv}) passing to the limit.
\end{proof}

Let now $F(x)  = \sum a_i(t) x^{\alpha_i} \in \KK_{\RR}[x_1, \dots, x_{n+1}]$ be a  polynomial such that $f = Trop F$ is non-singular, $(\textbf{v}_i)_{(1 \leq i \leq r)}$ the vertices of $V(f)$, $\triangle_F =\cup \triangle_i$  the subdivision of $\triangle_F$ in simplices dual to $V(f)$. Then for each vertex $\textbf{v}_i$, we set $f^{\textbf{v}_i} =Trop F^{\textbf{v}_i}$, with $F^{\textbf{v}_i} = \sum_{\alpha_j \in \triangle_i} a_j x^j$. Notice that $In_{\textbf{v}_i} f = f^{\textbf{v}_i}$.

The main step in the proof of Theorem~\ref{Thm:Tropical-equality} is the following: \\

\begin{prop} \label{totcurvR} Let $F \in \KK_{\RR}[x_1, \dots, x_{n+1}]$ be such that $f = $Trop$ F$ is non-singular. Then
\[ \int _{V^{\RR} (f)}\vert k \vert=\sum_{i=1}^r\int_{V^{\RR}(f^{\textbf{v}_i})}\vert k \vert = r \; vol(\RR \PP^n).\]
\end{prop}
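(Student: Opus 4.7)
The second equality follows immediately from Proposition~\ref{Prop:Realcurv}: since $f$ is non-singular, each truncation $f^{\mathbf{v}_i}$ is a primitive tropical polynomial, so every summand $\int_{V^\RR(f^{\mathbf{v}_i})}|k|$ equals $vol(\RR\PP^n)$ and the sum over the $r$ vertices is $r\,vol(\RR\PP^n)$. The real content of the proposition is thus the first equality, which by Remark~\ref{Def:Amoebas-Real-Curvature} and Definition~\ref{Def:Real-Tropical-Curvature} reduces to showing that for $0 < t \ll 1$ and for almost every $\beta \in \RR\PP^n$ the real fiber $(\gamma_t^\RR)^{-1}(\beta)$ has exactly $r$ points. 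Once this is established, $\int_{\mathcal{A}^\RR(F_t)}|k|\,dv = r\,vol(\RR\PP^n)$ is constant for all small $t$, which in particular settles the existence of the limit mentioned in Remark~\ref{Rem:limit}.

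For the upper bound, Remark~\ref{Rem:Tprimitive-nondegenerate} and Proposition~\ref{Prop:Nondegenerate} give that $F_t$ is B-generic for $0 < t \ll 1$, so Bernstein's theorem applied to $(G_\beta)$ yields
\[
\#\gamma_t^{-1}(\beta) = (n+1)!\,vol(\triangle_F) = \sum_{i=1}^r (n+1)!\,vol(\triangle_i) = r
\]
for generic $\beta$. Since any real preimage is a complex preimage, $\#(\gamma_t^\RR)^{-1}(\beta) \le r$. For the matching lower bound, the plan is to match the $r$ complex solutions of $(G_\beta)$ one-to-one with the $r$ vertices of $V(f)$ and then apply Proposition~\ref{Prop:Realcurv} at each vertex. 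Writing a complex solution $x(t)$ as a Puiseux series in $t$, its valuation vector $-Val(x(t))$ must coincide with some vertex $\mathbf{v}_i$ of $V(f)$ (balancing the leading orders in $F_t(x)=0$ and $x_j\,\partial F_t/\partial x_j = \beta_j y$ rules out cells of positive dimension of $V(f)$ for generic $\beta$), and the leading-order system at $\mathbf{v}_i$ is exactly $(G_\beta)$ for the primitive truncation $F^{\mathbf{v}_i}_t$. Proposition~\ref{Prop:Realcurv} applied to $f^{\mathbf{v}_i}$ forces the unique complex root of this local system to be real. Since each vertex contributes one of the $r$ global complex solutions, all of them are real for $t$ small enough, so $\#(\gamma_t^\RR)^{-1}(\beta) = r$, matching the upper bound.

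The main obstacle is the rigorous justification of the localization step, namely the bijective correspondence between complex roots of the global system $(G_\beta)$ and vertices of $V(f)$ via $x(t) \mapsto \lim_{t\to 0}\Log_t(x(t))$. A natural route is to invoke the additivity of Bernstein's mixed volume along the regular subdivision $\triangle_F = \bigcup_i \triangle_i$, each primitive cell contributing exactly one root whose $\Log_t$-asymptotic is the dual vertex $\mathbf{v}_i$; an equivalent route is a direct Puiseux-series / tropical-valuation analysis of $(G_\beta)$, using non-singularity of $f$ to exclude contributions of positive-dimensional cells. Once this bijection is in hand, the identification of each local complex root with a real one is just Proposition~\ref{Prop:Realcurv} applied vertex by vertex.
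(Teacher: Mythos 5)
Your overall strategy matches the paper's: establish the second equality via Proposition~\ref{Prop:Realcurv}, get the upper bound $\#(\gamma_t^\RR)^{-1}(\beta)\le r$ from Bernstein (via complete non-degeneracy), and obtain the matching lower bound by localizing the real solutions of $(G'_\beta)$ at the $r$ vertices of $V(f)$, using primitivity at each vertex. The second equality and the upper bound are correct and essentially as in the paper.

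The gap you yourself flag — ``the rigorous justification of the localization step'' — is indeed the one thing your proposal leaves unfinished, and it is precisely what the paper fills with a dedicated lemma stated right before the proposition. The paper's argument runs in the opposite direction from the valuation-matching route you sketch: rather than starting from a complex root of $(G_\beta)$ and arguing that its valuation vector sits at some vertex, the paper starts at a vertex $v$, translates it to the origin, and writes $F_t = H + t^\nu Q_t$ with $H$ the primitive truncation. By the proof of Proposition~\ref{Prop:Realcurv}, for generic $\beta_0$ the system $(G^H_{\beta_0})$ has a unique, non-degenerate, real solution $x_0$; since $(G'_\beta)$ is a small real perturbation of $(G^H_\beta)$, a simple real root persists, and $\Log_t$ of the perturbed root converges to $v$ as $t\to 0$. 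This gives, for each of the $r$ vertices, one real point of $g_t^{-1}(\beta)$ in a small ball around that vertex, hence $\#(\gamma_t^\RR)^{-1}(\beta)\ge r$ for $\beta$ away from a set of volume $<\epsilon$, and the result follows on letting $\epsilon\to 0$. Your two proposed routes (additivity of mixed volume along the primitive subdivision, or a direct Puiseux/tropical analysis of $(G_\beta)$) could both be made to work, but either one would in effect reprove this lemma; in particular you would still need the perturbation/reality step (a simple real root of the limiting primitive system stays real under the $t^\nu Q_t$ deformation), which your sketch does not make explicit. So: same approach as the paper, but with the central localization lemma left as a sketch rather than a proof.
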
 

Before proving the proposition, we need a lemma of "localisation" at a vertex.\\
\begin{lemma}
Let $v \in V(f)$ be a vertex, $v =  (\lambda_1, \dots, \lambda_{n+1}) \in \RR^{n+1}$. Let $\beta_0 \in  \RR \PP^n $ be a generic element (see the proof of Proposition  \ref{Prop:Realcurv}) and $\eta > 0$ be given. Then there exist $\epsilon  >0$ and $t_0 > 0$ such that for any $\beta$ such that $d(\beta,\beta_0) < \epsilon$ and $t$ such that $0 < t < t_0$, we have  $ B(v, \eta) \cap g_t^{-1}(\beta) \not = \emptyset$. Here $d$ is induced on $\RR \PP^n$ by the distance on the unit sphere in $\RR^{n+1}$.
\end{lemma}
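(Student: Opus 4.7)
The plan is to reduce to the primitive polynomial $F^v := F^{\textbf{v}_i}$, the truncation of $F$ to the simplex $\check{v}$ dual to $v$, exploit the degree-one property of the logarithmic Gauss map in the primitive case, and then transfer the preimage back to $F_t$ by a perturbation argument. Since $f$ is non-singular, $f^v = In_v f = Trop(F^v)$ is primitive, so $V(f^v)$ is a tropical hyperplane with unique vertex $v$ and $n+2$ unbounded top-dimensional cells, whose outer normals form a finite set $N \subset \RR\PP^n$ of $n+2$ points. By the proof of Proposition~\ref{Prop:Realcurv}, for every $\beta \in \RR\PP^n \setminus N$ the map $\gamma^{\RR}_{F^v_t}$ has exactly one preimage $x_t(\beta) \in \RR V(F^v_t)$ and $\gamma_{F^v_t}$ is a local diffeomorphism there (degree one follows from Bernstein's theorem applied to $\check v$, which is primitive, combined with the mod~$2$ real/complex count); the implicit function theorem then provides $\epsilon_1 > 0$ and a smooth family $\beta \mapsto x_t(\beta)$ over $B(\beta_0,\epsilon_1)$.

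Next, as $t \to 0$ the amoeba $\mathcal{A}^{\RR}(F^v_t)$ Hausdorff-converges on compact sets to $V(f^v)$, and outside any ball $B(v,\eta/2)$ it is confined to arbitrarily narrow tubes around the unbounded cells of $V(f^v)$. On such tubes the Gauss map $g_t$ takes values arbitrarily close to $N$, so choosing $\beta_0 \notin N$ and $\epsilon \leq \min(\epsilon_1, \tfrac12 d(\beta_0, N))$ forces $\Log_t(x_t(\beta)) \in B(v,\eta/2)$ for every $\beta \in B(\beta_0, \epsilon)$ and every $t$ sufficiently small. In other words, the unique Gauss preimage of $\beta$ for the primitive system must sit near the unique vertex $v$.

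Finally, on $\Log_t^{-1}(B(v,\eta))$ the monomials of $F_t$ not present in $F^v_t$ have strictly smaller valuation than the dominant ones and therefore, together with their logarithmic derivatives, are $o(1)$ relatively to $F^v_t$ as $t \to 0$. Thus the system $(G_\beta)$ for $F_t$ is a $C^1$-small perturbation of the one for $F^v_t$ near $x_t(\beta)$, and a final application of the implicit function theorem, based on the non-degeneracy of $\gamma^{\RR}_{F^v_t}$ at $x_t(\beta)$, produces a solution $\tilde x_t(\beta)$ of $(G_\beta)$ for $F_t$ inside $\Log_t^{-1}(B(v,\eta))$, yielding the required point of $B(v,\eta) \cap g_t^{-1}(\beta)$. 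The main obstacle is the quantitative side of the localisation step: one has to estimate, uniformly in $t$ and in $\beta$ near $\beta_0$, how fast the Gauss values on $\mathcal{A}^{\RR}(F^v_t) \setminus B(v,\eta/2)$ converge to the finite set $N$. This is a quantitative strengthening of Kapranov's theorem for amoebas of primitive hypersurfaces, and it is the genericity $\beta_0 \notin N$ that makes it usable here.
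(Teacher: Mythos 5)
Your overall plan is in the same spirit as the paper's proof -- localise at the vertex using the primitive simplex $\check v$, find the non-degenerate preimage of a generic $\beta_0$, and transfer it to the full system by an implicit-function-theorem perturbation. But the way you carry out the localisation leaves a genuine gap that you yourself flag at the end: to conclude that the unique preimage $x_t(\beta)$ of $\gamma^{\RR}_{F^v_t}$ sits inside $\Log_t^{-1}(B(v,\eta/2))$, you need to know that on the tentacles of $\mathcal A^{\RR}(F^v_t)$ the Gauss image is uniformly close to the finite set $N$ of outer normals, uniformly in $t$. Hausdorff convergence of amoebas is a $C^0$ statement; controlling the Gauss map requires a $C^1$ estimate on the family of hypersurfaces in the tentacle region, which you do not prove and which is not an off-the-shelf consequence of Kapranov's theorem.

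The paper avoids this difficulty entirely by a rescaling trick that you do not use. After translating the vertex to the origin by $x_i \mapsto t^{-\lambda_i} x_i$ and factoring out the relevant power of $t$, the polynomial becomes $F_t(x) = H(x) + t^\nu Q_t(x)$ with $\nu > 0$, where $H(x) = \sum_{\alpha \in Vert(\check v)} a_i(0)\, x^\alpha$ is a \emph{fixed} primitive polynomial with constant coefficients. The non-degenerate solution $x_0$ of the $\beta_0$-system for $H$ is a fixed point of $(\CC^*)^{n+1}$; for $\beta$ close to $\beta_0$ the solution $x_H$ of the $\beta$-system for $H$ lies in the fixed ball $B(\Log x_0, 1)$; and since $(G'_\beta)$ for $F_t$ is a small deformation of the system for $H$, a solution $x_F$ lies in $B(\Log x_0, 2)$ for $t$ small. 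Because $\Log_t = \Log / \log t$, the image $\Log_t(x_F)$ lands in $B\bigl(\Log(x_0)/\log t,\, 2/|\log t|\bigr)$, which is contained in $B(0,\eta) = B(v,\eta)$ once $t$ is small enough. In other words, the shrinkage comes for free from the $1/\log t$ factor, not from any estimate on the geometry of the amoeba away from the vertex. You should replace your $t$-dependent reference family $F^v_t$ by the fixed polynomial $H$ after this coordinate change; your remaining steps (degree one for the primitive system, implicit function theorem, perturbation by the extra monomials) then close up without the quantitative tentacle estimate.
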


\begin{proof}
Up to multiplying each $x_i$ by $t^{-\lambda_i}$
 (which has the effect of translating the vertex $v$ to $0$) and multiplying $F_t$ by the relevant power of $t$, one may write:
$$F_t(x)=\sum_{\alpha \in Vert(\check{v})} a_i(0) x^\alpha + t^\nu Q_t(x)$$ 
where  $a_i(0)\in \RR^*$,  $\nu$ is a positive real number and $Q_t\in \KK^\RR[x_1,\dotsc,x_{n+1}]$ is a polynomial whose coefficients have non-negative valuation.
 We set $H(x)=\sum_{\alpha \in Vert(\check{v})} a_i(0) x^\alpha$ so that $F_t= H +t^\nu Q_t$.
For $\beta \in \RR \PP^n$, the points of $g_t^{-1}(\beta)$ are the images under $\Log_t$ of the solutions of the system:
\[ (G'_\beta)  \begin{cases}  F_t = 0 \\ \displaystyle [x_1\frac{\partial F_t}{\partial x_1}: \dotso : x_{n+1}\frac{\partial F_t}{\partial x_{n+1}}] = [\beta] \end{cases}. \]
 For a generic $\beta_0$, we know by the proof of Proposition~\ref{Prop:Realcurv} that the system: 
\[  \begin{cases}  H = 0 \\ \displaystyle [x_1\frac{\partial H}{\partial x_1}: \dotso : x_{n+1}\frac{\partial H}{\partial x_{n+1}}] = [\beta_0] \end{cases} \]
has a non-degenerated solution $x_0$.
Then the system:
\[  (G^H_\beta) \begin{cases}  H = 0 \\ \displaystyle [x_1\frac{\partial H}{\partial x_1}: \dotso : x_{n+1}\frac{\partial H}{\partial x_{n+1}}] = [\beta] \end{cases} \]
has a solution $x_H$ such that $\Log(x_H)$ is in the ball $B(\Log(x_0),1)$ for $d(\beta,\beta_0)$ sufficiently small.
  
The system $(G'_\beta)$ is a one parameter deformation of $(G^H_\beta)$ therefore it has a solution $x_F$ such that $\Log(x_F)\in B(\Log(x_0),2)$ for $t$ small enough.
Then $\Log_t(x_F)$ is in the ball $\displaystyle B(\frac{\Log(x_0)}{\log t},\frac{2}{|\log t|})$ which is included, for $t$ small enough, in  $B(0, \eta)\, =\, B(v, \eta)$.
\end{proof}

Let us now prove Proposition~\ref{totcurvR}.
 
Let $\Omega \subset \RR \PP^n$ be a compact set, $t_0 > 0$ and  $\epsilon > 0$ such that:
\par 1) Vol$(\RR \PP^n \setminus \Omega) < \epsilon$ 
\par 2) For any direction $\beta \in \Omega$ and any vertex $v$ of $V(f)$, the system $(G'_\beta)$  has a non-degenerated  solution.\\
Then, if $r$ is the number of vertices of $V(f)$, 
 we have that for $0 < t < t_0$:
\[ \int_{{\mathcal A}^{\RR}(F_t)} \vert k \vert \geq r vol (\RR \PP^n) - r \epsilon \]
passing to the limit when $t_0 \rightarrow 0$ and $\epsilon \rightarrow 0$ gives the result.

We now deduce easily the main result of the paper.

\begin{thm}\label{Thm:Tropical-equality} Let $F \in \KK_{\RR}[x_1, \dots, x_{n+1}]$ be such that $f = $Trop$ F$ is non-singular.
 Then
\begin{equation}\label{Eq:Tropical-equality}
 \frac{\sigma_{2n}}{\sigma_n} \int_{V^\RR(f)}\vert k\vert = \int_{V(f)}\vert K\vert
\end{equation}
\end{thm}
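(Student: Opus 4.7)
The plan is to combine the two ``localised'' computations already available—Proposition~\ref{totcurvR} for the real side and Corollary~\ref{Rem:complexamoeba} (via Definition~\ref{Def:Complex-Tropical-Curvature}) for the complex side—and then match the two explicit expressions using the combinatorics of non-singular subdivisions and the elementary identities relating $\sigma_n$, $\sigma_{2n}$, $\sigma_{2n+1}$ and $a_n$.

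First I would extract the real side. By Proposition~\ref{totcurvR}, since $f$ is non-singular,
\[
\int_{V^{\RR}(f)} \vert k\vert \;=\; r\,\operatorname{vol}(\RR\PP^n) \;=\; \frac{r\,\sigma_n}{2},
\]
where $r$ is the number of vertices of $V(f)$. Next I would compute the complex side: by Definition~\ref{Def:Complex-Tropical-Curvature} and the observation that $\vert K\vert = (-1)^n K$,
\[
\int_{V(f)} \vert K\vert \;=\; a_n\,(n+1)!\operatorname{vol}(\triangle_F)\,\operatorname{vol}(\CC\PP^n).
\]

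The combinatorial step is to observe that the non-singularity of $f$ means that the dual subdivision $\Gamma_f$ of $\triangle_F$ consists of primitive $(n+1)$-simplices; each such simplex has Euclidean volume $1/(n+1)!$, and the vertices of $V(f)$ are in bijection with the maximal cells of $\Gamma_f$. Consequently
\[
(n+1)!\operatorname{vol}(\triangle_F) \;=\; r.
\]

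It remains to verify the numerical identity
\[
\frac{\sigma_{2n}}{\sigma_n}\cdot\frac{r\,\sigma_n}{2}
\;=\; a_n\, r\,\operatorname{vol}(\CC\PP^n),
\]
which, after cancellation, reduces to $\sigma_{2n}/2 = a_n\operatorname{vol}(\CC\PP^n)$. Using $\operatorname{vol}(\CC\PP^n) = \sigma_{2n+1}/\sigma_1 = \sigma_{2n+1}/(2\pi)$ and the definition $a_n = \pi\,\sigma_{2n}/\sigma_{2n+1}$, both sides equal $\sigma_{2n}/2$, so the identity holds. There is no real obstacle here beyond bookkeeping; the entire analytic content has already been packaged into Proposition~\ref{totcurvR} and Corollary~\ref{Rem:complexamoeba}, so the proof is essentially a two-line combinatorial and numerical check.
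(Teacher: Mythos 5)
Your proposal is correct and follows the same approach as the paper: both rely on Proposition~\ref{totcurvR} for the real side, on the global formula from Definition~\ref{Def:Complex-Tropical-Curvature} for the complex side, and then reduce the matching to the numerical identity $\frac{\sigma_{2n}}{\sigma_n}\operatorname{vol}(\RR\PP^n) = a_n\operatorname{vol}(\CC\PP^n)$. The only cosmetic difference is that the paper phrases the reduction as ``it suffices to treat the primitive case'' via Corollary~\ref{Cor:Complex-Tropical-Curvature}, while you instead invoke the equivalent combinatorial count $(n+1)!\operatorname{vol}(\triangle_F)=r$ directly.
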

\begin{proof}  By Proposition~\ref{totcurvR} and Corollary~\ref{Cor:Complex-Tropical-Curvature}, it is enough to prove the theorem in the primitive case. We have then to prove that $\frac{\sigma_{2n}}{\sigma_n} vol(\RR \PP^n) = a_n vol(\CC \PP^n)$ with $vol(\CC \PP^n) = \frac{\sigma_{2n + 1}}{\sigma_1}$, that is immediate.
\end{proof}


\section{Polyhedral total curvature of a real tropical hypersurface.}\label{Sec:Polyhedral-Curvature}

\subsection{Definition and elementary properties}

Our definition of the curvature in the polyhedral case is similar to  Banchoff's in \cite{Ban70} (see also \cite{Ban67} and \cite{Ban83}) but, exactly as in the complex case,  we only consider absolute value of the curvature here.

It amounts to the following.
The {\bf solid angle} of a cone is the portion of the unit sphere centred at the vertex of the cone that it intersects, its {\bf measure} is the volume of this spherical portion. We might abuse terminology and write "solid angle" when we mean its measure.

Let $\textbf{v}$ be a vertex of a polyhedral hypersurface $\mathfrak{H}$ (here our real tropical hypersurface $\RR V^\RR_\vartheta(f)$). For sufficiently small neighbourhoods $U$ of $\textbf{v}$, $U \setminus \mathfrak{H}$ has two connected components. Label one by $+$ and the other by $-$ (for $\RR V^\RR_\vartheta(f)$  these will be the signs of the corresponding vertices of the dual subdivision). For each maximal dimensional cell of $\mathfrak{H}$ containing $\textbf{v}$, choose a 
normal vector oriented from $-$ to $+$. The {\bf curvature cone} $C_\textbf{v}$ at $\textbf{v}$ is the cone generated by these vectors.  

\begin{defi}\label{Def:Local-Polyhedral-Curvature}
The {\bf curvature} $\kappa_{\textbf{v}}$ at $\textbf{v}$ is the measure of the solid angle of the curvature cone $C_\textbf{v}$. 
\end{defi}

\begin{rem}\label{Rem:Curvature-cone}
For $V^\RR_\vartheta(f)$ the normal vectors above can be chosen to correspond to the vectors $n_i$
 supported by the edges of the simplex $\check{\textbf{v}}$  dual to $\textbf{v}$ and oriented from a vertex with $-$ sign  to a vertex with $+$ sign.

Thus the curvature cone $C_\textbf{v}$ is naturally identified to the cone generated by the $n_i$'s and depends only on the simplex $\check{\textbf{v}}$ dual to $\textbf{v}$ and the sign distribution at the vertices of $\check{\textbf{v}}$.
\end{rem}

\begin{rem}\label{Rem:Sign-inversion}
Changing all the vectors to their opposites clearly leaves $\kappa_{\textbf{v}}$ invariant.
\end{rem}

\begin{defi}\label{Def:Polyhedral-Curvature} Let $f$ be a generic tropical polynomial and let $\vartheta$ be a distribution of signs in $\mathcal{E}(f)$. The polyhedral total curvature of $V^\RR_\vartheta(f)$ is $$\int_{V^\RR_\vartheta(f)}\vert k^p\vert:=\sum_{\textbf{v} \in Vert (\RR V^\RR_\vartheta(f))}\kappa_{\textbf{v}}$$
{\it i.e.}, it is the sum of the curvatures at all vertices of the real part of $V^\RR_\vartheta(f)$.
\end{defi}

It follows from the definition that we have the equality below.
\begin{lemma}\label{Prop:By-Def}Let $f$ be a generic tropical polynomial let $\vartheta$ a distribution of signs in $\mathcal{E}(f)$. If $\textbf{v}_1,\dots,\textbf{v}_k$ are the vertices of $V(f)$, then $$\int_{V^\RR_\vartheta(f)}\vert k^p\vert=\sum_{i=1}^{k}\int_{V^\RR_{\vartheta_i}(In_{\textbf{v}_i}f)}\vert k^p\vert,$$ where $\vartheta_i=\vartheta\mid_{ \mathcal{E}(In_{\textbf{v}_i}f)}.$
\end{lemma}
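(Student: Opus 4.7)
The plan is to unpack both sides of the claimed equality directly from Definitions~\ref{Def:Real-Tropical-Hypersurface} and~\ref{Def:Polyhedral-Curvature}, matching contributions vertex-by-vertex via Remark~\ref{Rem:Curvature-cone}. The key point is that the polyhedral curvature is by construction a purely local invariant at each vertex of $\RR V^\RR_\vartheta(f)$, and the local data at a vertex coming from $\textbf{v}_i$ is encoded entirely by $\check{\textbf{v}_i}$ together with the restriction of $\vartheta$ to $Vert(\check{\textbf{v}_i})$.

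First I would identify the vertex set of $\RR V^\RR_\vartheta(f)$. By Definition~\ref{Def:Real-Tropical-Hypersurface}, this vertex set is exactly
\[
\bigl\{s_z(\textbf{v}_i) \;:\; 1\le i\le k,\; z\in\ZZ_2^{n+1},\; S_z(\vartheta)(Vert(\check{\textbf{v}_i}))=\ZZ_2\bigr\}.
\]
Since $\mathcal{E}(In_{\textbf{v}_i}f)=Vert(\check{\textbf{v}_i})$ and $\vartheta_i$ is the restriction of $\vartheta$ to this set, the same description shows that the vertex set of $\RR V^\RR_{\vartheta_i}(In_{\textbf{v}_i}f)$ is $\{s_z(\textbf{v}_i): z\in\ZZ_2^{n+1},\; S_z(\vartheta_i)(Vert(\check{\textbf{v}_i}))=\ZZ_2\}$, which is in canonical bijection with the subset of vertices of $\RR V^\RR_\vartheta(f)$ that come from $\textbf{v}_i$.

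Next I would compare the local curvature contributions. Remark~\ref{Rem:Curvature-cone} tells us that the curvature cone at $s_z(\textbf{v}_i)$, and hence $\kappa_{s_z(\textbf{v}_i)}$, depends only on the simplex $\check{\textbf{v}_i}$ and on the sign distribution $S_z(\vartheta)$ restricted to $Vert(\check{\textbf{v}_i})$. Because $S_z$ acts coordinate-wise on the vertices of $\mathcal{E}(f)$ and $\vartheta_i=\vartheta|_{\mathcal{E}(In_{\textbf{v}_i}f)}$, we have $S_z(\vartheta)|_{Vert(\check{\textbf{v}_i})} = S_z(\vartheta_i)$. Therefore the curvature at $s_z(\textbf{v}_i)$ computed inside $\RR V^\RR_\vartheta(f)$ coincides with the curvature at the same point computed inside $\RR V^\RR_{\vartheta_i}(In_{\textbf{v}_i}f)$.

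Finally, applying Definition~\ref{Def:Polyhedral-Curvature} and grouping the sum over $Vert(\RR V^\RR_\vartheta(f))$ by the underlying vertex $\textbf{v}_i$ of $V(f)$ yields
\[
\int_{V^\RR_\vartheta(f)}\vert k^p\vert = \sum_{i=1}^{k}\sum_{\substack{z\in\ZZ_2^{n+1}\\ S_z(\vartheta_i)(Vert(\check{\textbf{v}_i}))=\ZZ_2}}\kappa_{s_z(\textbf{v}_i)} = \sum_{i=1}^{k}\int_{V^\RR_{\vartheta_i}(In_{\textbf{v}_i}f)}\vert k^p\vert.
\]
There is no real obstacle here: the whole statement is a bookkeeping consequence of the definitions, and the only thing to check carefully is the compatibility $S_z(\vartheta)|_{Vert(\check{\textbf{v}_i})}=S_z(\vartheta_i)$, which is immediate from the coordinate-wise nature of the $\ZZ_2^{n+1}$-action on sign distributions.
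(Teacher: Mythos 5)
Your argument is correct and takes exactly the route the paper intends: the paper itself gives no proof for this lemma, simply stating that it \emph{``follows from the definition,''} and your vertex-by-vertex bookkeeping --- identifying $Vert(\RR V^\RR_\vartheta(f))$ with the union over $i$ of $Vert(\RR V^\RR_{\vartheta_i}(In_{\textbf{v}_i}f))$ and invoking the locality of the curvature cone from Remark~\ref{Rem:Curvature-cone} --- is precisely the unpacking the authors leave implicit.
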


We denote by $C_{s_z(\textbf{v})}$ the curvature cone at the vertex $s_z(\textbf{v})$ of the real tropical hypersurface $\RR V^\RR_\vartheta(f)$. It is the cone generated by vector edges of $\check{v}$ oriented form vertices with minus sign to vertices with plus sign in the sign distribution $S_z(\vartheta)$.
\begin{rem}\label{Rem:alternative-Sum}
One can also define the curvature $\widetilde{\kappa_\textbf{v}}$ at a vertex $\textbf{v}$ of $V(f)$ to be the sum over all symmetric copies $s_z(\textbf{v})$ of $\textbf{v}$ appearing in $\RR V^\RR_\vartheta(f)$ of the solid angles of the corresponding curvature cones $C_{s_z(\textbf{v})}$  {\it i.e.}, $\widetilde{\kappa_\textbf{v}}:=\sum_{s_z(\textbf{v}) \in \RR V^\RR_\vartheta(f)}\kappa(s_z(\textbf{v}))$.  Then $\int_{V^\RR_\vartheta(f)}\vert k^p\vert=\sum_{\textbf{v} \in Vert(V(f))}\widetilde{\kappa_{\textbf{v}}}$.
\end{rem}

\subsection{Elementary simplex case}

\begin{defi}\label{Def:Elementary}
Let $S \in \RR^{n+1}$ be a simplex with integer vertices and $(u_i)_{i\in\{1..n+1\}}$ be the collection of the vectors defined by edges issuing from one of its vertices. The simplex $S$ is {\bf elementary} if $(\overline{u_i})_{i\in\{1..n+1\}}$ is a basis of $(\ZZ_2)^{n+1}$ where $\overline{u_i}$ is the reduction modulo $2$ of $u_i$.  
\end{defi}

\begin{prop}\label{Prop:Hyperplane-Polyhedral-Curvature} Let $f$ be a tropical polynomial such that $\triangle_f$ is an elementary simplex and that $\E(f)=Vert(\triangle_f)$.
 Then, for any distribution of signs $\vartheta$, 
\begin{equation}\label{Eq:Hyperplane-Polyhedral-Curvature}
\int _{V^\RR_\vartheta(f)}\vert k^p \vert=\frac{\sigma_n}{2}.
\end{equation}
\end{prop}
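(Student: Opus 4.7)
The plan is to reduce the polyhedral curvature to a sum of solid angles indexed by all sign distributions on $Vert(\triangle_f)$, and then show that the corresponding cones simply cover $\RR^{n+1}$. By Remark~\ref{Rem:Curvature-cone}, the curvature cone at a vertex $s_z(\textbf{v})$ of $\RR V^\RR_\vartheta(f)$ depends, up to isometry, only on $\triangle_f$ and on the sign distribution $S_z(\vartheta)$. For any sign distribution $\vartheta'\colon Vert(\triangle_f)\to\{+,-\}$, set $S_\pm:=\{v_i: \vartheta'(v_i)=\pm\}$, let $C(\vartheta')$ be the cone generated by $\{v_j-v_i: v_i\in S_-,\ v_j\in S_+\}$, and $\kappa(\vartheta')$ the measure of its solid angle (with $\kappa(\vartheta')=0$ when $\vartheta'$ is constant, matching the fact that $s_z(\textbf{v})\notin\RR V^\RR_\vartheta(f)$ in that case). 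Definition~\ref{Def:Polyhedral-Curvature} then reads
\[
\int_{V^\RR_\vartheta(f)}\vert k^p\vert = \sum_{z\in\ZZ_2^{n+1}}\kappa(S_z(\vartheta)).
\]

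Next I invoke the elementary hypothesis to rewrite the right-hand side as a sum over \emph{all} sign distributions. Fix a vertex $v_0$ of $\triangle_f$. A stabilizer $z$ of $\vartheta'$ under the $\ZZ_2^{n+1}$-action $(z,\vartheta')\mapsto S_z(\vartheta')$ satisfies $z\cdot v\equiv 0\pmod 2$ for every vertex $v$, hence $z\cdot(v-v_0)\equiv 0$ for all $v\neq v_0$; since the edges from $v_0$ form a basis of $\ZZ_2^{n+1}$ modulo $2$, this forces $z=0$. The action is therefore free, so the $2^{n+2}$ distributions split into two orbits of size $2^{n+1}$. The same reasoning shows that $-\vartheta'=S_z(\vartheta')$ would force $z=0$ while requiring $z\cdot v_0\equiv 1$, a contradiction; hence $\vartheta'\mapsto-\vartheta'$ bijects the two orbits, and since $\kappa(\vartheta')=\kappa(-\vartheta')$ by Remark~\ref{Rem:Sign-inversion},
\[
\sum_{z\in\ZZ_2^{n+1}}\kappa(S_z(\vartheta)) = \tfrac{1}{2}\sum_{\vartheta'}\kappa(\vartheta').
\]

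The core step is to prove $\sum_{\vartheta'}\kappa(\vartheta')=\sigma_n$ through a cone decomposition of $\RR^{n+1}$. The affine independence of $v_0,\dots,v_{n+1}$ in $\RR^{n+1}$ means the linear system $\sum c_iv_i=u$, $\sum c_i=0$ has a unique solution $(c_0(u),\dots,c_{n+1}(u))$ for every $u\in\RR^{n+1}$. I claim that
\[
u\in C(\vartheta') \ \Longleftrightarrow\ c_i(u)\vartheta'(v_i)\geq 0\ \text{for all }i.
\]
The forward implication comes from expanding $u=\sum_{i\in S_-,\ j\in S_+}\lambda_{ij}(v_j-v_i)$ with $\lambda_{ij}\geq 0$ and regrouping by vertex. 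For the converse, given compatible $c_i(u)$, set $\alpha_j=c_j(u)$ for $j\in S_+$, $\beta_i=-c_i(u)$ for $i\in S_-$, and $T=\sum_j\alpha_j=\sum_i\beta_i$; then the transportation weights $\lambda_{ij}:=\alpha_j\beta_i/T\geq 0$ realise $u$ as a non-negative combination of generators of $C(\vartheta')$. Each $c_i$ is a non-trivial linear functional of $u$ (the other $n+1$ vertices affinely span only an $n$-dimensional subspace), so $\{u:c_i(u)=0\ \text{for some }i\}$ is a finite union of hyperplanes. For almost every $u\in S^n$ all $c_i(u)$ are non-zero, and $u$ lies in the interior of the unique cone $C(\vartheta')$ with $\vartheta'(v_i)=sign(c_i(u))$; therefore
\[
\sum_{\vartheta'}\kappa(\vartheta') = \int_{S^n}\#\{\vartheta':u\in\mathrm{int}\,C(\vartheta')\}\,du = \sigma_n.
\]

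The main obstacle I foresee is the sign-pattern characterization of $C(\vartheta')$ in the third paragraph, and specifically the converse implication, which requires the transportation-type formula $\lambda_{ij}=\alpha_j\beta_i/T$ to convert the barycentric-like data $(c_i(u))$ into non-negative edge weights. The other ingredients (freeness of the $\ZZ_2^{n+1}$-action, $\kappa$-invariance under $\vartheta'\mapsto-\vartheta'$, and the generic cell count) are bookkeeping once this characterization is in hand.
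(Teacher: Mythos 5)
Your proof is correct, and it follows the same geometric skeleton as the paper's (the curvature cones are cells of the hyperplane arrangement determined by the facets of $\triangle_f$), but with two technical choices that differ enough to be worth noting. First, where the paper proves $C_\varphi = \bigcap_v H_v^{\varphi(v)}$ (Lemma~\ref{Lem:ConesAndHyperplanes0}) by analysing the facets of $C_\varphi$ and splitting the edge graph into connected components, you characterise membership in $C(\vartheta')$ directly via the affine coefficients $c_i(u)$, and the transportation weights $\lambda_{ij}=\alpha_j\beta_i/T$ give a short, explicit proof of the non-obvious inclusion $\bigcap_v H_v^{\varphi(v)}\subset C(\vartheta')$; this is cleaner than the argument in Lemma~\ref{Lem:ConesAndHyperplanes0}. (Note that $c_i$ and the paper's $n_{v_i}\cdot\,$ are positive multiples of each other, so the two characterisations are literally the same statement, only the proof differs.) Second, where the paper fixes $\varphi(v_0)=-$ via the transitivity statement (Proposition~\ref{Prop:Transitivity}) and covers the half-space $H_{v_0}^-$, you sum over all $2^{n+2}$ sign distributions, tile $S^n$ once generically, and then divide by $2$ using freeness of the $\ZZ_2^{n+1}$-action together with $\kappa(\vartheta')=\kappa(-\vartheta')$ (Remark~\ref{Rem:Sign-inversion}). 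The two bookkeeping schemes are equivalent — a free action of $\ZZ_2^{n+1}$ on the $2^{n+2}$ sign distributions with two orbits swapped by inversion is precisely a simply transitive action on $\mathcal D_{Vert(S)}$ — but the ``cover the full sphere and halve'' version avoids singling out a base vertex and makes the role of the elementary hypothesis (freeness of the action) slightly more transparent.
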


In particular, Proposition~\ref{Prop:Hyperplane-Polyhedral-Curvature} holds for primitive real tropical hypersurfaces (those whose Newton polytope is primitive) since a primitive simplex is elementary. 
The two corollaries below follow from Proposition~\ref{Prop:Hyperplane-Polyhedral-Curvature} and Lemma~\ref{Prop:By-Def}.
\begin{cor}\label{Cor:Generic-Tropical-Curvature}Let $f$ be a generic tropical polynomial and let $\textbf{v}_1,\dots,\textbf{v}_l$ be the vertices of $V(f)$.  If $\triangle_{In_{\textbf{v}_i}f}$ is an elementary simplex for all $i$ then, for any distributions of signs $\vartheta$,
 $$\int_{V^\RR_\vartheta(f)}\vert k^p\vert=l \frac{\sigma_n}{2}.$$
\end{cor}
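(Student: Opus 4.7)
The statement is essentially an immediate combination of the two earlier results, so the plan is short. The idea is to use the additivity of polyhedral total curvature over vertices (Lemma~\ref{Prop:By-Def}) to reduce to a purely local computation at each vertex $\textbf{v}_i$, and then to recognise each local contribution as falling under Proposition~\ref{Prop:Hyperplane-Polyhedral-Curvature}.

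First I would invoke Lemma~\ref{Prop:By-Def} to write
\[
\int_{V^\RR_\vartheta(f)}\vert k^p\vert=\sum_{i=1}^{l}\int_{V^\RR_{\vartheta_i}(In_{\textbf{v}_i}f)}\vert k^p\vert,
\]
where $\vartheta_i$ is the restriction of $\vartheta$ to $\mathcal{E}(In_{\textbf{v}_i}f)$. Thus it suffices to show that each summand equals $\sigma_n/2$. For this I would apply Proposition~\ref{Prop:Hyperplane-Polyhedral-Curvature} to the tropical polynomial $In_{\textbf{v}_i}f$. By hypothesis, $\triangle_{In_{\textbf{v}_i}f}$ is an elementary simplex, so one condition of that proposition holds.

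The one small verification is the equality $\mathcal{E}(In_{\textbf{v}_i}f)=Vert(\triangle_{In_{\textbf{v}_i}f})$. This follows from the genericity of $f$: at a vertex $\textbf{v}_i$ of $V(f)$, the dual cell $\check{\textbf{v}_i}$ is a simplex of dimension $n+1$, and by definition of $In_{\textbf{v}_i}f$ its exponents are exactly the elements of $\mathcal{E}(f)$ realising the maximum at $\textbf{v}_i$, i.e.\ the vertices of $\check{\textbf{v}_i}=\triangle_{In_{\textbf{v}_i}f}$. Hence Proposition~\ref{Prop:Hyperplane-Polyhedral-Curvature} applies and each summand equals $\sigma_n/2$.

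Summing the $l$ identical contributions yields the desired total $l\,\sigma_n/2$. There is no genuine obstacle: the only point that requires a line of justification is the identification of $\mathcal{E}(In_{\textbf{v}_i}f)$ with $Vert(\triangle_{In_{\textbf{v}_i}f})$, which is built into the definition of the initial part together with the fact that for a generic $f$ the dual subdivision $\Gamma_f$ is simplicial.
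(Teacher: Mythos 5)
Your overall plan, decompose via Lemma~\ref{Prop:By-Def} and then apply Proposition~\ref{Prop:Hyperplane-Polyhedral-Curvature} vertex by vertex, is exactly the paper's intended argument (the paper simply asserts the corollary ``follows from'' these two results). However, the one extra step you add, the claim that $\mathcal{E}(In_{\textbf{v}_i}f)=Vert(\triangle_{In_{\textbf{v}_i}f})$ ``follows from the genericity of $f$,'' is not correct. Genericity means only that $\Gamma_f$ is simplicial; it does not prevent lattice points of $\mathcal{E}(f)$ from lying on a cell of $\Gamma_f$ without being vertices of it, since the coefficient $u_\alpha$ of such a point may lie exactly on the affine function determined by the cell. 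For a concrete example in one variable, take $f=\text{``}0 + 1X + 2X^2 + 3X^3\text{''}=\max\{0,\,1+X,\,2+2X,\,3+3X\}$: here $\Gamma_f$ is simplicial (its only maximal cell is $[0,3]$), the vertex of $V(f)$ is $-1$, and $[0,3]$ is elementary, yet $\mathcal{E}(In_{-1}f)=\{0,1,2,3\}$ strictly contains $Vert([0,3])=\{0,3\}$. So the hypothesis $\mathcal{E}=Vert(\triangle)$ of Proposition~\ref{Prop:Hyperplane-Polyhedral-Curvature} need not hold for $In_{\textbf{v}_i}f$.

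This is easily repaired, and the paper already contains the ingredient: by Remark~\ref{Rem:Curvature-cone} the curvature cone at $s_z(\textbf{v}_i)$ depends only on the sign distribution at the \emph{vertices} of the dual simplex $\check{\textbf{v}_i}$, and by Definition~\ref{Def:Real-Tropical-Hypersurface} the same is true for deciding which symmetric copies of $\textbf{v}_i$ belong to the real part. Hence $\int_{V^\RR_{\vartheta_i}(In_{\textbf{v}_i}f)}\vert k^p\vert$ is unchanged if one discards the non-vertex exponents of $In_{\textbf{v}_i}f$ and restricts $\vartheta_i$ to $Vert(\triangle_{In_{\textbf{v}_i}f})$, after which Proposition~\ref{Prop:Hyperplane-Polyhedral-Curvature} applies verbatim and each term equals $\sigma_n/2$. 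With that substitution in place of the flawed ``verification,'' your proof is complete and coincides with the paper's.
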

\begin{cor}\label{Cor:Primitive-Tropical-Curvature}Let $f$ be a non-singular tropical polynomial. Then, for any distribution of signs $\vartheta$, $$\int_{V^\RR_\vartheta(f)}\vert k^p\vert=(n+1)!Vol(\triangle_f)\frac{\sigma_n}{2}.$$
\end{cor}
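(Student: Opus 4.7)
The plan is to reduce Corollary \ref{Cor:Primitive-Tropical-Curvature} to the previously established Corollary \ref{Cor:Generic-Tropical-Curvature} by verifying two things: that in the non-singular case every local contribution is already covered by Proposition \ref{Prop:Hyperplane-Polyhedral-Curvature}, and that the number of vertices of $V(f)$ equals $(n+1)!\,Vol(\triangle_f)$. The bulk of the work has already been done; what remains is essentially bookkeeping.

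First, I would verify that a primitive simplex is elementary in the sense of Definition \ref{Def:Elementary}. If $w_0,\dots,w_{n+1}$ are the vertices of a primitive simplex, then $(w_i-w_0)_{i=1,\dots,n+1}$ is a $\ZZ$-basis of $\ZZ^{n+1}$, so the matrix they form has determinant $\pm 1$; reducing modulo $2$ preserves invertibility, so the reductions $\overline{w_i-w_0}$ form a $\ZZ_2$-basis of $\ZZ_2^{n+1}$. Hence primitive implies elementary. Since $f$ is non-singular, every maximal cell of $\Gamma_f$ is primitive, and by the duality between $V(f)$ and $\Gamma_f$, for each vertex $\textbf{v}_i$ of $V(f)$ the dual cell $\check{\textbf{v}}_i=\triangle_{In_{\textbf{v}_i}f}$ is a primitive, hence elementary, $(n+1)$-simplex.

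Next, I apply Corollary \ref{Cor:Generic-Tropical-Curvature}: if $l$ denotes the number of vertices of $V(f)$, then for any sign distribution $\vartheta$,
\[
\int_{V^\RR_\vartheta(f)}\vert k^p\vert \;=\; l\,\frac{\sigma_n}{2}.
\]

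It remains to compute $l$. The maximal cells of the dual subdivision $\Gamma_f$ are in bijection with the vertices of $V(f)$, so $l$ equals the number of top-dimensional cells of $\Gamma_f$. Each such cell is a primitive $(n+1)$-simplex and therefore has Euclidean volume $\tfrac{1}{(n+1)!}$. Since these cells tile $\triangle_f$, summing volumes yields $l\cdot\tfrac{1}{(n+1)!}=Vol(\triangle_f)$, whence $l=(n+1)!\,Vol(\triangle_f)$, and substituting gives the desired formula. The only potentially subtle point in the plan is the primitive-implies-elementary step (it relies on $\det M=\pm1$ surviving reduction mod $2$), but this is immediate; otherwise everything follows mechanically from the earlier results.
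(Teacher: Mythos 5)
Your proof is correct and is essentially the argument the paper intends: the paper states immediately after Proposition \ref{Prop:Hyperplane-Polyhedral-Curvature} that a primitive simplex is elementary, and then asserts that both corollaries follow from that proposition together with Lemma \ref{Prop:By-Def}. You have simply filled in the two small details the paper leaves implicit — that $\det\equiv\pm1\pmod 2$ gives elementarity, and that counting primitive $(n+1)$-simplices tiling $\triangle_f$ yields $l=(n+1)!\,Vol(\triangle_f)$.
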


From Proposition~\ref{Prop:Hyperplane-Polyhedral-Curvature} and Proposition~\ref{totcurvR} we deduce the equality between the polyhedral curvature and real total curvature of a non-singular real tropical hypersurface:

\begin{prop}\label{Thm:Polhedral=Limit}Let $f$ be a non-singular tropical polynomial. Then, for any distribution of signs $\vartheta$, $$\int_{V^\RR_\vartheta(f)}\vert k\vert=\int_{V^\RR_\vartheta(f)}\vert k^p\vert.$$
\end{prop}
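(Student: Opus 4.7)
The plan is to observe that both sides of the claimed equality have already been computed explicitly in the preceding material, and that the two numerical answers coincide. First I would invoke Proposition~\ref{totcurvR}, which evaluates
\[ \int _{V^{\RR}(f)}\vert k \vert \;=\; r \cdot vol(\RR\PP^n) \;=\; r \cdot \frac{\sigma_n}{2}, \]
where $r$ is the number of vertices of $V(f)$. The crucial point is that this number depends neither on the real polynomial $F \in \KK_\RR[x_1,\dots,x_{n+1}]$ realising $f$ nor on the induced sign distribution $\vartheta_F$, so the same value computes $\int_{V^\RR_\vartheta(f)}\vert k \vert$ for every $\vartheta$: indeed any $\vartheta$ can be realised by choosing suitable leading signs of the coefficients of some $F$, and the non-singularity of $f$ ensures that the genericity hypotheses of Proposition~\ref{Prop:Patchworking} are met so that $V^\RR(f)$ and $\RR V^\RR_\vartheta(f)$ coincide as sets.

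Second I would invoke Corollary~\ref{Cor:Primitive-Tropical-Curvature}, which gives directly the matching polyhedral formula
\[ \int_{V^\RR_\vartheta(f)} \vert k^p \vert \;=\; (n+1)!\,Vol(\triangle_f) \cdot \frac{\sigma_n}{2} \]
for any distribution of signs $\vartheta$.

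It then remains only to identify $r$ with $(n+1)!\,Vol(\triangle_f)$. Since $f$ is non-singular, every top-dimensional cell of the dual subdivision $\Gamma_f$ is a primitive $(n+1)$-simplex, hence of Euclidean volume $\frac{1}{(n+1)!}$; by duality these cells are in bijection with the vertices of $V(f)$, so summing their volumes yields $Vol(\triangle_f) = r/(n+1)!$. Plugging this in makes the two previous formulas coincide, and the proposition follows.

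I do not foresee any genuine obstacle: the two non-trivial ingredients, namely the amoeba-limit computation of Proposition~\ref{totcurvR} and the primitive-case polyhedral computation of Proposition~\ref{Prop:Hyperplane-Polyhedral-Curvature} (extended by the additivity Lemma~\ref{Prop:By-Def}), have already been carried out. The only small piece of book-keeping is the transition between $V^\RR(f)$, defined via a polynomial $F$, and $V^\RR_\vartheta(f)$, defined combinatorially via a sign distribution; this is precisely handled by Proposition~\ref{Prop:Patchworking}, applicable in the non-singular regime.
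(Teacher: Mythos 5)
Your argument is correct and is essentially the paper's own proof: the paper's one-line justification simply observes that both sides equal $(n+1)!\,Vol(\triangle_f)\cdot vol(\RR\PP^n)$, combining Proposition~\ref{totcurvR} (real tropical side, with $r$ identified with $(n+1)!\,Vol(\triangle_f)$) and Corollary~\ref{Cor:Primitive-Tropical-Curvature} (polyhedral side). Your extra remarks on realising $\vartheta$ by a suitable $F$ and on Proposition~\ref{Prop:Patchworking} make explicit a bookkeeping point the paper leaves implicit, but the route is the same.
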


\begin{proof} Both can be expressed as $(n+1)!Vol(\triangle_f)$ times the volume of $\RR\PP^{n}$ (see Corollary~\ref{Cor:Primitive-Tropical-Curvature} in the polyhedral case). 
\end{proof}

\subsection{Proof of Proposition ~\ref{Prop:Hyperplane-Polyhedral-Curvature}}

We will use the following proposition which essentially follows from Itenberg's Prop 3.1 in \cite{It97}.

Recall that $\RR V^\RR_\vartheta (f)= \RR V^\RR_{-\vartheta} (f)$ so for our study we might as well consider sign distributions up to total inversion of signs. We will denote $\mathcal D_{\E(f)}$ the set of sign distributions on $\E(f)$ up to simultaneous change of all signs.

\begin{prop}\label{Prop:Transitivity}
Let $S$ be an elementary simplex. The group $(\ZZ_2)^{n+1}$ acts transitively on $\mathcal D_{Vert(S)}$ via the maps $S_z$ (see Subsection~\ref{Subsec:Real-Tropical-Hypersurfaces}).
\end{prop}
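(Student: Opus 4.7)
My plan is to prove transitivity by a counting argument combined with a freeness argument. First I would observe that the number of sign distributions on the $n+2$ vertices of the simplex $S$ is $2^{n+2}$, and after identifying $\vartheta$ with $-\vartheta$ we have
\[
|\mathcal{D}_{Vert(S)}| = 2^{n+1} = |(\ZZ_2)^{n+1}|.
\]
Thus it suffices to show the action is free; transitivity then follows automatically since the orbit of any class has size dividing $|(\ZZ_2)^{n+1}|$ and must therefore fill $\mathcal D_{Vert(S)}$.

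Second, I would check that the action is well-defined on $\mathcal D_{Vert(S)}$: the identity $S_z(-\vartheta) = -S_z(\vartheta)$ shows that $S_z$ preserves the equivalence relation given by simultaneous sign inversion, so it descends to the quotient.

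Third, I would translate the stabilizer condition into linear algebra over $\ZZ_2$. The class of $\vartheta$ is fixed by $S_z$ if and only if $S_z(\vartheta) = \pm \vartheta$, i.e.\ if and only if the map $v \mapsto z\cdot v \bmod 2$ is constant on $Vert(S)$. Writing $Vert(S) = \{v_0,v_1,\dots,v_{n+1}\}$ and setting $u_i = v_i - v_0$, this is equivalent to
\[
z \cdot u_i \equiv 0 \pmod 2 \quad \text{for } i=1,\dots,n+1,
\]
i.e.\ $z$ is orthogonal in $\ZZ_2^{n+1}$ to every reduced edge vector $\overline{u}_i$.

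The main step, and the only place where the hypothesis is used, is the invocation of elementarity. By Definition~\ref{Def:Elementary}, $(\overline{u}_i)_{i=1,\dots,n+1}$ is a $\ZZ_2$-basis of $\ZZ_2^{n+1}$; therefore the orthogonality conditions above force $z = 0$ in $\ZZ_2^{n+1}$. Hence the stabilizer of every class is trivial, the action is free, and by the cardinality match it is transitive on $\mathcal D_{Vert(S)}$. I do not anticipate serious obstacles: the content is essentially a non-degeneracy statement of the pairing $\ZZ_2^{n+1}\times \ZZ_2^{n+1}\to \ZZ_2$ relative to the basis supplied by elementarity, and the slight subtlety of working modulo the $\vartheta\sim-\vartheta$ identification is exactly what matches the group order to the set cardinality.
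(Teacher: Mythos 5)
Your proof is correct, but it takes a genuinely different route from the paper. The paper argues constructively: using elementarity it takes the $\ZZ_2$-dual basis $(\overline{z_i})$ of $(\overline{u_i})$ and checks that each $S_{\overline{z_i}}$ flips exactly the sign at $v_i$ (or, equivalently modulo simultaneous inversion, flips the signs at all other vertices), so that any class in $\mathcal D_{Vert(S)}$ is reached from any other by composing these single-vertex flips. You instead run a counting-plus-freeness argument: you compute $|\mathcal D_{Vert(S)}|=2^{n+1}=|(\ZZ_2)^{n+1}|$, identify the stabilizer condition as ``$v\mapsto z\cdot v\bmod 2$ constant on $Vert(S)$'', translate it into $z$ being orthogonal to every $\overline{u_i}$, and use elementarity to deduce $z=0$; by orbit-stabilizer a free action of a group on a set of the same cardinality is transitive. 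Both proofs ultimately exploit the same non-degeneracy of the mod-$2$ pairing relative to the basis $(\overline{u_i})$; the paper's version has the advantage of exhibiting explicit generating moves (useful later when manipulating sign distributions), while yours is shorter and a bit more structural. One small infelicity in your write-up: after ``it suffices to show the action is free'' you justify transitivity with ``the orbit has size dividing $|(\ZZ_2)^{n+1}|$,'' which is true but not the operative point -- what you need (and clearly intend) is that freeness forces the orbit to have size exactly $|(\ZZ_2)^{n+1}|$, which equals $|\mathcal D_{Vert(S)}|$. This is a phrasing issue, not a gap.
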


\begin{proof}
In notation of Definition~\ref{Def:Elementary} the $\overline{u_i}$'s form a basis of $(\ZZ_2)^{n+1}$. Let $(\overline{z_i})_{i=1..n+1}$ be the dual basis.
 Then for each vertex $v_i$ of $S$,

\begin{itemize}
\item either $S_{\overline{z_i}}(\vartheta)(v_i)=- \vartheta(v_i)$ and $S_{\overline{z_i}}(\vartheta)(v_j)= \vartheta(v_j)$ for $v_j \neq v_i$  
\item or $S_{\overline{z_i}}(\vartheta)(v_i)= \vartheta(v_i)$ and $S_{\overline{z_i}}(\vartheta)(v_j)= -\vartheta(v_j)$ for $v_j \neq v_i$.
\end{itemize}

\end{proof}

We will prove that the curvature cones defined by a
real tropical hypersurface dual to an elementary simplex $S$ give
rise to a partition of a half-space, which yields the result.

Since we are considering the sign distributions up to total inversion of signs, we can assume that for one vertex $v_0$ of $S$, $S_z(\vartheta)(v_0)=-1$ for all $z \in {\ZZ_2}^{n+1}$.
By Proposition~\ref{Prop:Transitivity} and Remark~\ref{Rem:Curvature-cone}, the cones we need to consider are exactly those corresponding to all distributions of signs $\varphi$ on $Vert(S)$ such that $v_0$ carries a minus sign. Let us denote by $C_\varphi$ the curvature cone corresponding to such a distribution $\varphi$.

Let us prove that the curvature cones $C_\varphi$ naturally define a fan 
 which covers a half-space.

To each vertex $v$ of $S$ one associates its opposite facet $F_v$, the  
 vectorial hyperplane $H_v$ parallel to $F_v$
and a vector $n_v$ normal to $F_v$ pointing from $F_v$ to the interior of $S$. 

For a vertex $v$ of $S$ let $H_{v}^-$ and $H_{v}^+$ be respectively the half-space defined by $\{x \in \RR^{n+1}, n_{v}\cdot x \le 0\}$ and $\{x \in \RR^{n+1}, n_{v}\cdot x \ge 0\}$. 
The key point is the following fact.

\begin{lemma}\label{Lem:ConesAndHyperplanes0}
For a sign distribution $\varphi$ (with $\varphi (v_0)=-1$), the curvature cone $C_\varphi$ is the intersection of the half-spaces $H_{v}^{\varphi(v)}$ for all $v \in Vert(S)$. Moreover it is enough to intersect only those $H_{v}^{\varphi(v)}$ such that $\varphi(Vert(F_v))=\ZZ_2$.
\end{lemma}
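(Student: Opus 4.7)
The plan is to exploit the elementary-simplex hypothesis by working in the basis $(u_i)_{i=1..n+1}$ of edges at $v_0$, which is a basis of $\RR^{n+1}$ because its reduction modulo $2$ is a basis of $\ZZ_2^{n+1}$. Label the other vertices so that $v_k = v_0 + u_k$, write every direction as $x = \sum_{i=1}^{n+1} x_i u_i$, and set $I_+ = \{k : \varphi(v_k) = +1\}$, $I_- = \{k : \varphi(v_k) = -1\}$ (so $0 \in I_-$). A direct computation gives
\[
H_{v_0}^- = \Big\{x : \textstyle\sum_{k=1}^{n+1} x_k \ge 0\Big\}, \qquad H_{v_k}^+ = \{x : x_k \ge 0\}, \qquad H_{v_k}^- = \{x : x_k \le 0\} \quad (k \ge 1),
\]
because $F_{v_0}$ lies in the affine hyperplane $\sum x_k = 1$ and $n_{v_0}$ points toward $v_0 = 0$, while $F_{v_k}$ lies in $\{x_k = 0\}$ and $n_{v_k}$ points toward $v_k$. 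The generators of $C_\varphi$ are then exactly the $u_k$ for $k \in I_+$ together with the $u_k - u_i$ for $k \in I_+$, $i \in I_- \setminus \{0\}$.

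For the easy inclusion $C_\varphi \subseteq \bigcap_v H_v^{\varphi(v)}$, I would check each generator $u = v_j - v_i$ against each $H_w^{\varphi(w)}$: if $w \notin \{v_i, v_j\}$ then $v_i, v_j \in F_w$ and $n_w \cdot u = 0$; if $w = v_i$ then $v_j \in F_{v_i}$ and $n_{v_i}$ points away from $F_{v_i}$, giving $n_{v_i} \cdot u < 0$, hence $u \in H_{v_i}^- = H_{v_i}^{\varphi(v_i)}$; the case $w = v_j$ is symmetric.

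The reverse inclusion is the main technical step. Given $x$ in $\bigcap_v H_v^{\varphi(v)}$, the constraints read $x_k \ge 0$ for $k \in I_+$, $x_k \le 0$ for $k \in I_- \setminus \{0\}$, and $\sum_l x_l \ge 0$. Seeking $x = \sum_{k \in I_+} \alpha_k u_k + \sum \beta_{ki}(u_k - u_i)$ with non-negative coefficients, coordinate-wise identification (legitimate precisely because the $u_i$ form a basis, i.e.\ by the elementary hypothesis) reduces the problem to finding a non-negative matrix $(\beta_{ki})_{k \in I_+, i \in I_- \setminus \{0\}}$ with row sums $-x_i$ and column sums at most $x_k$. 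This transportation problem is feasible exactly when $\sum_i (-x_i) \le \sum_k x_k$, i.e.\ when $\sum_l x_l \ge 0$; an explicit solution is $\beta_{ki} = -x_i x_k / \sum_{k'} x_{k'}$ (the degenerate case $\sum x_k = 0$ forcing $x = 0$).

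For the second part, assume $\varphi$ is non-constant and recall that $\varphi$ is considered only up to total inversion. The monochromatic-facet cases then reduce to two: either $F_{v_0}$ carries only plus signs, forcing $I_+ = \{1,\dots,n+1\}$, and then $\sum_k x_k \ge 0$ is implied by the $x_k \ge 0$; or some $F_{v_k}$ with $k \ge 1$ carries only minus signs, forcing $I_+ = \{k\}$, and then $x_k \ge 0$ follows from $\sum_l x_l \ge 0$ combined with $x_i \le 0$ for $i \in I_- \setminus \{0\}$. In both cases $H_v^{\varphi(v)}$ is implied by the remaining constraints and can be dropped.
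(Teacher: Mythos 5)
Your proof is correct, but it follows a genuinely different route from the paper's. The paper proves the reverse inclusion $\bigcap_v H_v^{\varphi(v)} \subseteq C_\varphi$ geometrically: it shows that every facet of $C_\varphi$ must be parallel to a facet of $S$, and hence lie on some $H_v$, by a combinatorial argument on the graph of ``mixed'' edges $E_Y$ spanning a candidate facet (if $\langle Y\rangle$ lay in no $H_v$, then $E_Y$ would cover all vertices and split into at least two connected components separated by a hyperplane parallel to $\langle Y\rangle$, producing vectors of $W$ on both sides, contradicting that $Y$ spans a facet); the equality of the two cones then follows since one is contained in the other and all facets of $C_\varphi$ already lie on the bounding hyperplanes of $\bigcap_v H_v^{\varphi(v)}$. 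You instead pass to coordinates in the basis $(u_i)$ issuing from $v_0$ — legitimate since the elementary hypothesis (or even mere full-dimensionality of $S$) guarantees linear independence over $\RR$ — turning each $H_v^{\pm}$ into a coordinate half-space, and then explicitly realize any point of the intersection as a non-negative combination of the generators by solving a transportation problem with a closed-form solution. Your approach buys an explicit parametrization of $C_\varphi$ and avoids the slightly delicate reasoning about facets of cones, at the cost of introducing coordinates; the paper's argument stays coordinate-free and somewhat closer in spirit to what one needs for Lemma~\ref{Lem:ConesAndHyperplanes} afterward. Your treatment of the redundancy clause is also more explicit than the paper's one-line remark, and is correct (in the non-constant case, at most one facet of $S$ can be monochromatic, and in each of your two sub-cases the corresponding inequality is manifestly implied by the others).
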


\begin{proof}
We need to prove that the cone $C_\varphi$ is defined by  $\{x \in \RR^{n+1}, \forall v \in Vert(S),\> \varphi(v)\, n_v\cdot x \ge 0\}$.

\vspace*{1ex}
Let us denote by $E=\{e_i\}$ the set of edges of $S$ whose vertices have different signs and by $W = \{w_i\}$ the set of vectors such that $w_i$ is supported by $e_i$ and oriented from "-" to "+".
For any $v\in Vert(S)$, $W \subset H_{v}^{\varphi(v)}$. If $e_i$ is not in the face $F_v$ of $S$ opposite to $v$, it points towards the (affine) half-space containing $S$  if $\varphi(v)$ is "+" and towards the other half-space determined by $F_v$ if $\varphi(v)$ is "-". Thus $C_\varphi \subset \cap_{v\in S}  H_{v}^{\varphi(v)}$.
\vspace*{1ex}

Let us prove that each facet of $C_\varphi$ is parallel to a facet of $S$ and thus included in some $H_v$ which leads to the equality in the above inclusion.
\vspace*{1ex}

Each facet of $C_\varphi$ is a cone generated by a subset $Y$ of $W$ whose linear span $<Y>$ is of dimension $n$ and such that all vectors of $W\setminus Y$ are on the same side of $<Y>$.

 Let $Y$ be a subset of $W$, $E_Y$ be the corresponding subset of $E$ and assume that $\dim <Y> =n$ and $<Y>$ is included in no $H_v$. 
 Let us prove that the cone generated by $Y$ is not a face of  $C_\varphi$.
Each vertex of $S$ belongs to an edge of $E_Y$ (otherwise $<Y>$ would be parallel to a facet of $S$).

 Let $Vert(S)^+$ (resp. $Vert(S)^-$) be the set of vertices of $S$ with "+" respectively "-" signs.
 If either $\# Vert(S)^+$ or $\# Vert(S)^-$ is $1$ then $C_\varphi$ is just the cone of apex a vertex $v$ over its opposite face $F_v$ and its facets are cones on facets of $F_v$.
 
 Let us then assume that $\# Vert(S)^+ \ge 2$ and $\# Vert(S)^-\ge 2$. 
 If every pair of vertices in  $Vert(S)$ were connected by a chain of edges in $E_Y$ then every vertex would be connected by a chain of edge in $E_Y$ thus $\dim <Y> = n+1$ which would contradict the hypothesis.

Thus the edges in $E_Y$ split in several connected components $E_Y^i$. Each one contains at least one element of $ Vert(S)^-$ and one element of $Vert(S)^+$. The affine span $\Aff(E_Y^i)$ of $E_Y^i$ is just the affine span of the vertices it contains, thus it is the affine span of the corresponding face of $S$.

Each $\Aff(E_Y^i)$ is parallel to $<Y>$ which is of codimension one. Since $E_Y$ covers all vertices of $S$, all $\Aff(E_Y^i)$ are not in the same affine hyperplane parallel to $<Y>$. Then an affine hyperplane parallel to $<Y>$ separates the affine spans $\Aff(E_Y^1)$ and $\Aff(E_Y^2)$ of two connected components $E_Y^1$ and $E_Y^2$.

Let us pick two vertices  with minus sign ${v_1}^-$ and ${v_2}^-$ respectively in $E_Y^1$ and $E_Y^2$. 
Let $w_1$ (resp. $w_2$) be vectors in $W\setminus Y$ having origin $v_1^-$ (resp $v_2^-$) and extremity a vertex in $E_Y^2$  (resp. in $E_Y^1$).
The connected components  $E_Y^1$ and $E_Y^2$ being separated by an hyperplane parallel to $<Y>$, the vectors $w_1$ and $w_2$ are not on the same side of $<Y>$ and $Y$ does not generate a facet of  $C_\varphi$.
\vspace*{1ex}

Thus each facet of $C_\varphi$ is contained in one of the hyperplanes $H_v$ and, since we already have that $C_\varphi\subset \cap_{v\in S}  H_{v}^{\varphi(v)}$, then $C_\varphi=\cap_{v\in S}  H_{v}^{\varphi(v)}$.

\vspace*{1ex}

It is enough to intersect only those $H_{v}^{\varphi(v)}$ such that $\varphi(Vert(F_v))=\ZZ_2$. Indeed when the signs of all the vertices of a facet are the same, the intersection of $C_\varphi$  with $H_{v}$ is the origin.

\end{proof}

\begin{rem}
  In the proof of Lemma~\ref{Lem:ConesAndHyperplanes0} one can easily see that, if $\dim <Y> =n$ and $<Y>$ is included in no $H_v$, $E_Y$ has exactly two connected components. Indeed, the affine spans $\Aff(E_Y^i)$ are just the spans of pairwise disjoint faces of $S$.

Let $m$ be the number of connected components of  $E_Y$. Each $E_Y^i$ contains $\dim \Aff(E_Y^i) +1$ vertices and $\sum_i^m \dim \Aff(E_Y^i)=n$. But the number of vertices in $S$ is $n+2$, thus $\sum_i^m \left(\dim \Aff(E_Y^i) +1\right)=n+2$ and $m \le 2$. 
\end{rem}

\begin{lemma}\label{Lem:ConesAndHyperplanes}
Consider the union $A_{S}\> = \> \cup_{v\in Vert(S)} H_v$ of all linear hyperplanes $H_v$ and the collection of curvature cones $\mathcal C = (C_\varphi)_{\{\varphi \vert \varphi(v_0)=-1 \mbox{ and }\varphi(Vert(S))=\ZZ_2 \}}$. 
 The cones in $\mathcal C$ are precisely the maximal dimensional closed cones 
 in $H_{v_0}^-$ defined by $A_{S}$.
\end{lemma}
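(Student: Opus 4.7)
The plan is to identify each curvature cone $C_\varphi \in \mathcal C$ with the closed polyhedral cone $\cap_{v\in Vert(S)} H_v^{\varphi(v)}$ supplied by Lemma~\ref{Lem:ConesAndHyperplanes0}, and to match these with the closed chambers of the central arrangement $A_S$ lying in $H_{v_0}^-$ by going through all sign vectors on $Vert(S)$.

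First I would show that every $C_\varphi \in \mathcal C$ is a maximal dimensional closed chamber of $A_S$ contained in $H_{v_0}^-$. The containment in $H_{v_0}^-$ is immediate from $\varphi(v_0) = -1$, and the equality $C_\varphi = \cap_v H_v^{\varphi(v)}$ is exactly the content of Lemma~\ref{Lem:ConesAndHyperplanes0}, so $C_\varphi$ is a closed chamber. For full dimensionality I would argue that the edge vectors from vertices of negative sign to vertices of positive sign span $\RR^{n+1}$: for any $v \in Vert(S)^+$ the vector $v-v_0$ lies in the curvature cone, and for any $v' \in Vert(S)^- \setminus \{v_0\}$ and any $v \in Vert(S)^+$ we have $v'-v_0 = (v-v_0) - (v-v')$, so the linear span of $C_\varphi$ contains every $v-v_0$ with $v \neq v_0$; since the $n+2$ vertices of $S$ are in general position, those $n+1$ vectors form a basis of $\RR^{n+1}$.

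For the converse, I would take any maximal dimensional closed chamber $C$ of $A_S$ inside $H_{v_0}^-$; by the standard description of chambers of a central arrangement, $C = \cap_v H_v^{\epsilon(v)}$ for some sign vector $\epsilon$ with $\epsilon(v_0)=-$. It remains to exclude the constant sign vector $\epsilon \equiv -$, and this is the only non-formal step: the inward facet normals $(n_v)_{v \in Vert(S)}$ of the simplex $S$ satisfy a strictly positive linear relation $\sum_v \lambda_v n_v = 0$ with $\lambda_v > 0$ (a standard fact for simplex normals, equivalent to the boundedness of $S$). Hence no $x \neq 0$ can satisfy $n_v \cdot x < 0$ for every $v$, so $\cap_v \mathrm{int}(H_v^-) = \emptyset$ and the all-$-$ sign vector contributes no chamber of maximal dimension; mutatis mutandis for the all-$+$ vector. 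Therefore $\epsilon$ takes both signs, and setting $\varphi := \epsilon$ (taken modulo total sign reversal, which has already been fixed by $\epsilon(v_0)=-$) gives $C = C_\varphi \in \mathcal C$.

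The main obstacle is the positive-dependence property of the normals $n_v$; once that is invoked, the correspondence between $\mathcal C$ and the maximal chambers of $A_S$ lying in $H_{v_0}^-$ reduces to bookkeeping on sign vectors, with Lemma~\ref{Lem:ConesAndHyperplanes0} doing all the geometric work.
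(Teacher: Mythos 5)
Your proof is correct and follows the paper's route: invoke Lemma~\ref{Lem:ConesAndHyperplanes0} to identify $C_\varphi$ with $\cap_{v} H_v^{\varphi(v)}$, then match sign vectors $\epsilon$ with $\epsilon(v_0)=-$ against closed chambers of $A_S$ in $H_{v_0}^-$. You usefully make explicit two facts the paper disposes of only in a parenthetical remark: that a non-constant $\varphi$ yields a full-dimensional $C_\varphi$ (its generators span $\RR^{n+1}$), and that the constant sign vector produces a degenerate cone, via the strictly positive linear dependence $\sum_v \lambda_v n_v = 0$ among the inward facet normals of a simplex.
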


\begin{proof}
By Lemma~\ref{Lem:ConesAndHyperplanes0} for a sign distribution $\varphi$ (with $\varphi (v_0)=-1$), the curvature cone $C_\varphi$ is the intersection of the half-spaces $H_{v}^{\varphi(v)}$ for all $v \in Vert(S)$.

Any cone $D$ which is the closure of a connected component of the complement of $A_{S}\cap H_{v_0}^-$ in $ H_{v_0}^-$ is of the form $C_\varphi$. Indeed it is defined by a choice of a side for each  hyperplane $H_v$ {\it i.e.}, by the choice of the sign of the scalar product of vectors in the interior of $D$ with $n_v$ for each $v \in Vert(S)$. The sign distribution $\varphi$ is then given by $\varphi(v)=sign(n_v\cdot x)$ for any $x$ in the interior of $D$. Indeed, setting a sign on a vertex of $S$ amounts to choosing  on which side of $H_v$ are all vectors not in $H_v$ generating $C_\varphi$. The sign "-" corresponds to pointing from $F_v$ to the exterior of $S$ and "+" from $F_v$ to the interior of $S$.
(Of course cones defined by a choice of side for each $n+2$ hyperplanes $H_v$ can sometimes be reduced
 to the origin; this corresponds exactly to a sign distribution $\varphi$ on $Vert(S)$ which does not surjects on $\ZZ_2$, {\it i.e.}, to an empty orthant on the real tropical variety side.)
\end{proof}

Thus the closed cones $C_\varphi$ clearly cover $H_{v_0}^-$. (A vector $x$ in $H_{v_0}^-$ not belonging to one of the $H_v$'s is in $C_\varphi$ if and only if for all $v \in Vert(S)$, $\varphi(v)=sign(n_v\cdot x)$.) Moreover by Lemma~\ref{Lem:ConesAndHyperplanes} they realise a subdivision of $H_{v_0}^-$. So, since by Proposition~\ref{Prop:Transitivity} we get all possible sign distributions such that $\varphi (v_0)=-1$, we proved that $\int _{V^\RR_\vartheta(f)}\vert k^p \vert=\frac{vol(S^n)}{2}=\frac{\sigma_n}{2}$.
\qed
\begin{exa}\label{Ex:Total-Curvature-Real-Line}
On Figure~\ref{Fig:example} and Figure~\ref{Fig:Line-Total-Curvature} we illustrate Proposition~\ref{Prop:Hyperplane-Polyhedral-Curvature} on the trivial case of a real tropical line. We depict the angles formed by the vectors generating the curvature cones which in this case are the angles of the Newton triangle. 
\end{exa}

\begin{figure} [htbp]
\begin{center}
\resizebox{\textwidth}{!}{\input{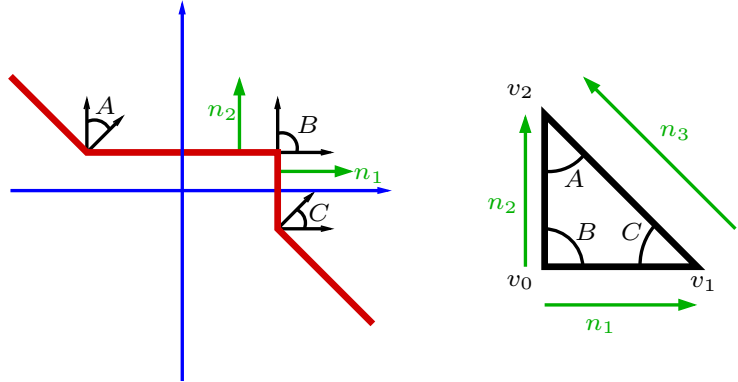}}
\caption {In the case of a tropical curve ($n=1$), Proposition~\ref{Prop:Hyperplane-Polyhedral-Curvature} follows from the fact that the sum of the interior angles of a triangle is equal to $\pi$.}
\label{Fig:example}
\end{center}
\end{figure}

\begin{figure} [htbp]
\begin{center}
\resizebox{\textwidth}{!}{\input{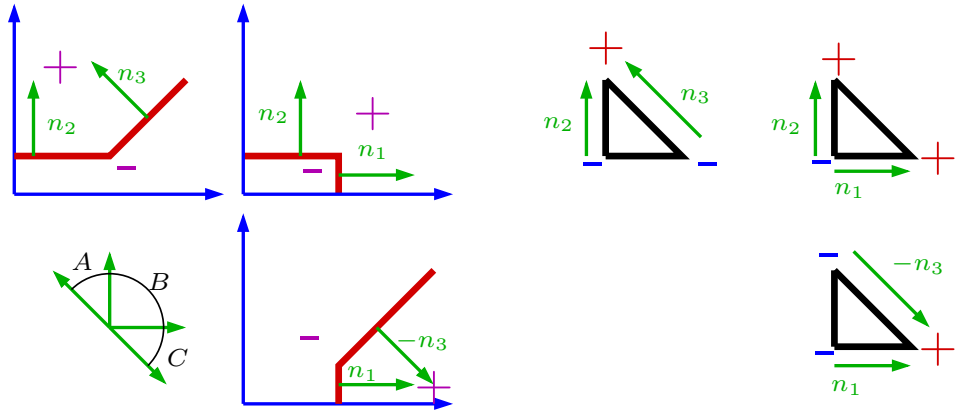}}
\caption{The curvature cones for the real line cover  $H_{v_0}^-$. The symmetric copies of the non-empty quadrants are shown on the left and the corresponding sign distributions at the vertices of the Newton polygon on the right. The bottom left picture shows how the curvature cones cover a half-plane.}
\label{Fig:Line-Total-Curvature}
\end{center}
\end{figure}


\section{Complement}\label{Sec:Complement}


\subsection{Tropical lower bound}\label{Subsec:T-Varieties}

In a forthcoming paper, the second author studies, using tropical geometry, the limit when $t$ goes to zero of the real total curvature of a family $\RR V(F_t)$ of real algebraic hypersurfaces. This study allows  to give a lower bound to this limit, for any polynomial $F\in\KK_\RR[x]$ realising a generic tropical hypersurface. 
This bound depends only on the tropicalisation of $F$ and will be called the \textit{tropical bound}.\\

Then, if $k$ is the classical curvature function, $$\lim_{t\rightarrow 0}\int_{\RR V(F_t)}\vert k\vert dv$$ 
will be bounded from above by the Risler's complex bound (see Inequation~(\ref{ineqcurv})) and from below by the tropical bound.  A polynomial $F\in\KK_\RR[x]$ is called \textit{maximal} with respect to the real total curvature if the Risler's upper bound is sharp. In other words, a polynomial $F\in\KK_\RR[x]$ is maximal with respect to the real total curvature if $$\lim_{t\rightarrow 0}\int_{\RR V(F_t)}\vert k\vert dv=\frac{\sigma_n}{\sigma_{2n}}\int_{V(F_t)}\vert K(x(t))\vert dv.$$

The idea behind the construction of the tropical bound is to look for the valuation of points in $V(F)\cap (\KK_\RR^*)^{n+1}$ that \textit{concentrate} the real total curvature: a point $x\in V(F)\cap (\KK_\RR^*)^{n+1}$ concentrates the real total curvature if for any family of neighbourhoods $\{U_t\}_{0<t\ll 1}$ of the family of points $x(t)$, $$\lim_{t\rightarrow 0}\int_{\RR V(F_t)\cap U_t}\vert k\vert dv\geq \frac{\sigma_n}{2}.$$

Via tropical methods, the second author gives a lower bound for the number of points in $V(F)$ that concentrates the real total curvature and the tropical bound arises as a direct consequence. Using this study, an infinite family of polynomials in $\KK_\RR[x]$ whose tropical bound is equal to their Risler's complex bound is constructed. This is a tropical proof of the following theorem:\\

\begin{thm}\label{Thm:Lucia}
For any $d\in\NN$ and any $n\in\NN$, there exists real polynomials of degree $d$ in $\KK[x_1,\dots,x_{n+1}]$ 
 maximal
 with respect to the real total curvature.
\end{thm}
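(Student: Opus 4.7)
The plan is to follow the tropical strategy outlined in the paragraphs preceding the theorem, as developed in the forthcoming paper of the second author. The idea is to give a tropical lower bound for $\lim_{t\to 0}\int_{\RR V(F_t)}|k|\,dv$ and to construct real polynomials for which this lower bound matches the Risler complex upper bound~(\ref{ineqcurv}).

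First, I would make rigorous the notion of a point $x\in V(F)\cap (\KK_\RR^*)^{n+1}$ that \emph{concentrates the real total curvature}: for every family of neighbourhoods $\{U_t\}_{0<t\ll 1}$ of $x(t)$,
\[
\lim_{t\to 0}\int_{\RR V(F_t)\cap U_t}|k|\,dv \;\geq\; \tfrac{\sigma_n}{2}.
\]
A local analysis of the Gauss map of $\RR V(F_t)$ around such a point, in the spirit of the localisation lemma preceding Proposition~\ref{totcurvR}, produces a combinatorial criterion detecting concentration points among the vertices of $V(\mathrm{Trop}\,F)$, expressed in terms of the sign distribution $\vartheta_F$ at the vertices of the dual simplex $\check{\mathbf v}$. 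Counting the detected vertices yields the tropical lower bound.

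Second, for every pair $(d,n)$ I would construct a real polynomial $F\in\KK_\RR[x_1,\dots,x_{n+1}]$ of degree $d$ whose tropicalisation is non-singular with Newton polytope the standard simplex of size $d$, and whose patchworking sign distribution makes \emph{every} vertex of $V(\mathrm{Trop}\,F)$ a concentration point. The existence of such $F$ follows from Viro's patchworking applied to a suitably chosen unimodular triangulation together with a Harnack-type sign distribution, generalising the classical Harnack construction for planar curves. Since $V(\mathrm{Trop}\,F)$ then has $(n+1)!\,\mathrm{vol}(\triangle_F)= d^{n+1}$ vertices, the tropical lower bound gives
\[
\lim_{t\to 0}\int_{\RR V(F_t)}|k|\,dv \;\geq\; d^{n+1}\cdot \tfrac{\sigma_n}{2}.
\]

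Third, I would match this to the Risler upper bound. The elementary identity $a_n\,\mathrm{vol}(\CC\PP^n) = \sigma_{2n}/2$ (which follows from the definitions $a_n=\pi\sigma_{2n}/\sigma_{2n+1}$ and $\mathrm{vol}(\CC\PP^n)=\sigma_{2n+1}/\sigma_1$ listed in Subsection~\ref{Subsec:Total-Curvature}), together with Corollary~\ref{Rem:complexamoeba}, yields $\int_{\mathcal{A}(F_t)}|K|=d^{n+1}\cdot \sigma_{2n}/2$, so that $\tfrac{\sigma_n}{\sigma_{2n}}\int_{V(F_t)}|K|\,dw = d^{n+1}\cdot \sigma_n/2$ in the limit $t\to 0$. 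Combined with Risler's inequality~(\ref{ineqcurv}), the two bounds coincide and equality must hold, so $F$ is maximal with respect to the real total curvature.

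The main obstacle is the proof of the tropical lower bound itself: showing that every combinatorially flagged concentration point really contributes a full $\sigma_n/2$ of real Euclidean curvature in the limit. This calls for a delicate local analysis of $\RR V(F_t)$ near the preimages under $\Log_t$ of vertices of the tropical hypersurface, and is the analogue, for the Euclidean curvature of the real variety, of Proposition~\ref{totcurvR} for the logarithmic curvature of the amoeba. This is precisely what the forthcoming paper of the second author establishes.
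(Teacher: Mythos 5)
The paper offers no proof of Theorem~\ref{Thm:Lucia}: it is stated as a consequence of a forthcoming paper of the second author and of \cite{lopezdemedrano}, so there is no in-paper argument to compare against. Your steps~1 and~2 are a reasonable informal restatement of the strategy announced in the paragraphs preceding the theorem, and you correctly acknowledge that the localisation analysis (the tropical lower bound) is the genuinely technical part that the paper defers.

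However, your step~3 contains a concrete error, independent of the deferred lemma. You use Corollary~\ref{Rem:complexamoeba} to compute $\int_{\mathcal{A}(F_t)}|K| = d^{n+1}\sigma_{2n}/2$ and then write ``so that $\frac{\sigma_n}{\sigma_{2n}}\int_{V(F_t)}|K|\,dw = d^{n+1}\sigma_n/2$ in the limit $t\to 0$.'' This silently identifies two different objects: the \emph{logarithmic} total curvature of the amoeba, which is what Corollary~\ref{Rem:complexamoeba} evaluates (via the logarithmic Gauss map $\gamma_t$ and Definition~\ref{Def:Amoebas-Complex-Curvature}), and the \emph{Euclidean} total curvature $\int_{V(F_t)}|K|\,dw$ of the complex hypersurface $V(F_t)\subset\CC^{n+1}$, which is what appears in Inequality~(\ref{ineqcurv}) and in the paper's definition of maximality (``the classical curvature function''). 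These are not equal, and the discrepancy does not vanish as $t\to 0$. For instance, take $F_t = 1 + x_1 + x_2$ in $(\CC^*)^2$: the affine complex line has identically zero Euclidean curvature, so $\int_{V(F_t)}|K|\,dw = 0$, while the logarithmic total curvature of its amoeba equals $a_1\,\mathrm{vol}(\CC\PP^1)=\sigma_2/2 = 2\pi$. Nothing in Corollary~\ref{Rem:complexamoeba} or elsewhere in the paper allows the replacement you make.

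This is not merely a notational slip: it propagates to your step~2. If all $d^{n+1}$ vertices of the tropical hypersurface were concentration points, the tropical lower bound would be $d^{n+1}\sigma_n/2$, whereas the Euclidean Gauss map of a generic degree-$d$ hypersurface has degree of order $d(d-1)^n$, so the Risler upper bound is strictly smaller than $d^{n+1}\sigma_n/2$ as soon as $d\ge 2$. A lower bound exceeding the upper bound is impossible, so the assertion that a Harnack-type sign distribution turns \emph{every} vertex into a concentration point cannot hold in the form you state. The actual tropical bound in the forthcoming paper must count a more restricted set of points (or count them with a finer notion of contribution), and the matching with the Euclidean Risler bound must use genuine information about $\int_{V(F_t)}|K|\,dw$ rather than the combinatorial logarithmic curvature formula.
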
 

One deduces from this theorem a tropical proof of Orevkov's observation (see \cite{orevkov}) about the sharpness (up to any $\epsilon>0$) of Risler's complex bound for affine real algebraic hypersurfaces.
 In the Viro's patchworking language, this result has been also proved in \cite{lopezdemedrano}.


\subsection{Gauss-Bonnet }\label{Subsec:Gauss-Bonnet}
Let $f$ be a non-singular tropical polynomial with Newton polytope $\triangle_f$, $V(f)$ the tropical variety it defines and $\triangle_f = \cup \triangle_i$ be its dual (primitive) triangulation.\\
 \begin{prop}
 Let $V(f)$ be a non-singular tropical hypersurface with Newton polytope $\triangle_f$, and $V \subset (\CC^*)^{n+1}$ be a generic complex hypersurface with Newton polyhedra $\triangle_f$. Then:
 \begin{equation} \label{G-B}
  \int_{V(f)} K =  a_n \frac{\sigma_{2n+1}}{\sigma_1} \chi (V)
  \end{equation}
 where $\chi(V)$ stands for the {\it Euler characteristic} of $V$.
 \end{prop}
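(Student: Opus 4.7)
The plan is to combine the explicit formula for the complex tropical total curvature established in Definition~\ref{Def:Complex-Tropical-Curvature} with the classical formula of Khovanskii (or equivalently Bernstein--Kushnirenko) computing the Euler characteristic of a generic hypersurface in the complex torus in terms of the volume of its Newton polytope. This reduces the identity~(\ref{G-B}) to a purely numerical equality between two expressions in $\mathrm{vol}(\triangle_f)$.

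First I would recall from Definition~\ref{Def:Complex-Tropical-Curvature} that
\[
\int_{V(f)} K = (-1)^n a_n\,(n+1)!\,\mathrm{vol}(\triangle_f)\,\mathrm{vol}(\CC\PP^n),
\]
and rewrite $\mathrm{vol}(\CC\PP^n) = \sigma_{2n+1}/\sigma_1$, so that
\[
\int_{V(f)} K = a_n\,\frac{\sigma_{2n+1}}{\sigma_1}\,\bigl[(-1)^n (n+1)!\,\mathrm{vol}(\triangle_f)\bigr].
\]
Since $\mathrm{Trop}(F)=f$ is non-singular, Remark~\ref{Rem:Tprimitive-nondegenerate} guarantees that $F_t$ is completely non-degenerate for $0<t\ll 1$, hence $V(F_t)\subset(\CC^*)^{n+1}$ is smooth and its Newton polytope is $\triangle_f$. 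It suffices therefore to identify the bracketed quantity with $\chi(V)$ for a generic complex hypersurface with Newton polytope $\triangle_f$.

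The second and main step is to invoke Khovanskii's formula for the Euler characteristic of a non-degenerate hypersurface in $(\CC^*)^{n+1}$: for any generic $V$ with Newton polytope $\triangle_f$,
\[
\chi(V) = (-1)^n (n+1)!\,\mathrm{vol}(\triangle_f).
\]
This is a standard consequence of the Bernstein--Khovanskii--Kushnirenko theory and is independent of the specific choice of $V$ within the family, which justifies that the right-hand side of~(\ref{G-B}) depends only on $\triangle_f$. Substituting this expression into the equation above yields~(\ref{G-B}) immediately.

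The only real content of the argument is the citation of Khovanskii's formula; everything else is bookkeeping of the constants $a_n$, $\sigma_{2n+1}$, $\sigma_1$ and the signs. The mild subtlety to check is the compatibility of sign conventions: $K$ itself is not positive (only $(-1)^n K$ is), and this is precisely what produces the factor $(-1)^n$ that matches the $(-1)^n$ in Khovanskii's formula, ensuring the cancellation of signs so that the two sides of~(\ref{G-B}) agree without an extra sign factor.
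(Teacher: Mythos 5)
Your proof is correct and follows essentially the same route as the paper: combine the formula $\int_{V(f)} K = (-1)^n a_n (n+1)!\,\mathrm{vol}(\triangle_f)\,\mathrm{vol}(\CC\PP^n)$ from Definition~\ref{Def:Complex-Tropical-Curvature} with Khovanskii's identity $\chi(V) = (-1)^n (n+1)!\,\mathrm{vol}(\triangle_f)$ and the value $\mathrm{vol}(\CC\PP^n) = \sigma_{2n+1}/\sigma_1$. The only difference is cosmetic: the paper first rewrites $(n+1)!\,\mathrm{vol}(\triangle_f)$ as the number $r$ of vertices of $V(f)$ (using the primitive triangulation) before substituting Khovanskii's formula, whereas you substitute directly; your version is slightly more streamlined and in fact makes clear that the non-singularity hypothesis is not strictly needed for the identity, only the existence of a completely non-degenerate realisation.
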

 {\bf Remarks}
  \par a) If $M$ is a compact real variety of even dimension $n = 2m$, the classical Gauss-Bonnet formula is
 \[
 \int_{M} k= (-1)^m \frac{\sigma_n}{2}  \chi(M).
 \]

 \par b) For $n = 1$ (curves), (\ref{G-B}) gives:
 \[   \int_{V(f)} K = 2\pi \chi(V) = - 4 \pi vol(\triangle_f).   \]
 
 \par c) For $n = 2$ (surfaces), (\ref{G-B}) gives:
 \[\int_{V(f)} K  = 4 \frac{\pi^2}{ 3} \chi (V) = 8 \pi^2 vol(\triangle_f).
 \]
 \begin{proof} (of (\ref{G-B})). \\If $V \subset (\CC^*)^{n+1}$ is a generic hypersurface with Newton polytope $\triangle_f$,\break one has $\chi(V)= (-1)^n  (n+1)! \, vol(\triangle_f)$ (Hovansky's formula, cf. \cite{Hov78}). The tropical variety $V(f)$ is by hypothesis dual to a primitive triangulation $\triangle_f = \cup \triangle_i$ with $vol \triangle_i = 1/ (n+1)!$. \\
 Let $r$ be the number of $\triangle_i$'s ({\it i.e.}, the number of vertices of $V(f)$); then one has:
 \[\int_{V(f)} K = r (-1)^n a_n vol( \CC \PP^n) = r (-1)^n a_n \frac{\sigma_{2n + 1}}{\sigma_1} \]
  by Definition \ref{Def:Complex-Tropical-Curvature}.\\
This proves (\ref{G-B}), because $vol(\triangle_f) = r \times (1/(n+1)!)$, and then $(-1)^n r = \chi(V)$.
\end{proof}

\def\cprime{$'$} \def\cprime{$'$}
\providecommand{\bysame}{\leavevmode\hbox to3em{\hrulefill}\thinspace}
\providecommand{\MR}{\relax\ifhmode\unskip\space\fi MR }
\providecommand{\MRhref}[2]{%
  \href{http://www.ams.org/mathscinet-getitem?mr=#1}{#2}
}
\providecommand{\href}[2]{#2}

\end{document}